\theoremstyle{plain}
\newtheorem{thm}{Theorem}[section]
\newtheorem{theorem}[thm]{Theorem}
\newtheorem{lemma}[thm]{Lemma}
\newtheorem{corollary}[thm]{Corollary}
\newtheorem{proposition}[thm]{Proposition}
\theoremstyle{definition}
\newtheorem{remark}[thm]{Remark}
\newtheorem{examples}[thm]{Examples}
\numberwithin{equation}{section}
\newcommand{\sC}{{\mathcal C}}
\newcommand{\sK}{{\mathcal K}}
\newcommand{\sU}{{\mathcal U}}
\newcommand{\C}{{\mathbb C}}
\newcommand{\G}{{\mathbb G}}
\newcommand{\BP}{{\mathbb P}}
\newcommand{\Q}{{\mathbb Q}}
\newcommand{\BS}{{\mathbb S}}
\newcommand{\pit}{{\mathbb P}}
\newcommand{\qit}{{\mathbb Q}}
\newcommand{\0}{{\mathcal O}}
\newcommand{\Lag}{{\rm Lag}}
\newcommand{\SYM}{{\rm Sym}}
\newcommand{\Gr}{{\rm Gr}}
\title[Equivariant compactifications of vector groups with high index]{Equivariant compactifications of vector groups with high index}
\author{Baohua Fu and Pedro Montero}
\begin{document}
\maketitle  \setcounter{tocdepth}{1}

\begin{abstract}
In this note, we classify smooth equivariant compactifications of $\mathbb{G}_a^n$ which are Fano manifolds with index $\geq n-2$.
\end{abstract}

\section{Introduction}

 Let $\G^n_a$ be the complex vector group of dimension $n$, i.e., $\C^n$
equipped with the additive group structure.
 A {\em smooth equivariant
compactification}
 of $\G_a^n$ (SEC in abbreviation) is    a $\G_a^n$-action
 $A: \G_a^n \times X \to X$ on a projective manifold $X$ of dimension $n$
  with an open orbit $O \subset X$.  Since $\G_a^n$ contains no nontrivial finite subgroup,
  the open orbit $O$ is isomorphic to $\C^n$. When
 our interest is on the underlying manifold $X$, we say that $X$ {\em is a SEC}.

The study of SEC is started from \cite{HT}, where a classification in dimension 3 and of Picard number one is obtained.
Recently a full classification of Fano 3-folds which are SEC is obtained in \cite{HM}, while it seems difficult to pursue further in higher dimension.
In \cite{FH3}, the first author and J.-M. Hwang introduced the notion of Euler-symmetric projective varieties, namely nondegenerate projective varieties admitting many $\C^*$-actions of Euler type. It is shown in {\em loc. cit.} that they are equivariant compactifications of vector groups and they are classified by certain algebraic data (called symbol systems), while it remains the problem to translate the smoothness in terms of these algebraic data.

Recall that for a Fano manifold $X$ of dimension $n$, its {\em index} $i_X$ is the greatest integer such that $-K_X = i_X H$ for some divisor $H$ on $X$.
It is well-known that $i_X \leq n+1$. By a series of works of Fujita (\cite{F1}, \cite{F2}, \cite{F3}), Mella (\cite{Me}),  Mukai (\cite{M}) and Wi\'sniewski (\cite{W1}),  the classification of Fano $n$-folds with index $i_X \geq n-2$ is known. Based on this, we will give a classification of SEC $n$-folds with index $\geq n-2$. In the case of a prime Fano variety, that is, of a Fano variety with Picard number one, our result reads:

\begin{theorem} \label{t.main}
Let $X$ be an $n$-dimensional SEC with Picard number one. Assume that $i_X \geq n-2$. Then $X$ is isomorphic to one of the following:

\begin{itemize}
\item[(1)] 6 homogeneous varieties of algebraic groups:  $\BP^n, \Q^n, {\rm Gr}(2,5), {\rm Gr}(2,6), \BS_5, {\rm Lag}(6)$.
\item[(2)] 5 non-homogeneous varieties:
\begin{itemize}
\item[(2-a)] smooth linear sections of ${\rm Gr}(2,5)$ of codimension 1 or 2.
\item[(2-b)] $\BP^4$-general\footnote{See \cite[Definition 2.5]{FH2}.} linear sections of $\BS_5$ of codimension 1,2 or 3.
\end{itemize}
\end{itemize}
\end{theorem}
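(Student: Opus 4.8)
The plan is to combine the cited classification of prime Fano manifolds of index $\geq n-2$ with a case-by-case determination of which members admit a $\G_a^n$-action having an open orbit. First I would invoke the theorems of Fujita, Mella, Mukai and Wi\'sniewski to reduce $X$ to an explicit finite list, organised by $i_X$. The cases $i_X=n+1$ and $i_X=n$ leave only $\BP^n$ and $\Q^n$. The case $i_X=n-1$ gives the del Pezzo manifolds of Picard number one, indexed by the degree $d=H^n\in\{1,\dots,5\}$. The case $i_X=n-2$ gives the Mukai manifolds, indexed by the genus $g$, whose top-dimensional models are a quadric section of $\Gr(2,5)$ $(g=6)$, $\BS_5$ $(g=7)$, $\Gr(2,6)$ $(g=8)$, $\Lag(6)$ $(g=9)$ and the $G_2$-variety $(g=10)$, together with hypersurface, double-cover and complete-intersection families of smaller genus and the genus $12$ threefolds. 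It then remains to decide membership in the SEC class family by family.

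For the positive direction I would first observe that each of $\BP^n,\Q^n,\Gr(2,5),\Gr(2,6),\BS_5,\Lag(6)$ is a cominuscule homogeneous space $G/P$, i.e. the unipotent radical $U^-$ of the opposite parabolic is abelian; then $U^-\cong\G_a^n$ acts on $G/P$ with the opposite big cell as open orbit, exhibiting each as a SEC. Comparing $i_{G/P}$ with $\dim G/P$ shows that these are exactly the cominuscule spaces with $i_X\geq n-2$, the Cayley plane and the Freudenthal variety being excluded by index. The remaining, non-homogeneous candidates I would realise as smooth Euler-symmetric varieties in the sense of \cite{FH3}: a sufficiently general linear section of $\Gr(2,5)$, respectively of $\BS_5$, taken through the base point, inherits via its symbol system a structure of Euler-symmetric variety, hence an intrinsic $\G_a^{\dim}$-action with open orbit; the $\BP^4$-general condition of \cite{FH2} is exactly what ensures both smoothness and the persistence of this structure for sections of $\BS_5$.

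For the negative direction I would use that a SEC requires an embedding $\G_a^n\hookrightarrow\mathrm{Aut}^0(X)$ as an $n$-dimensional abelian unipotent subgroup acting with a dense orbit; hence any family whose members carry no such subgroup is excluded. By the finiteness of automorphism groups of smooth hypersurfaces, double covers and complete intersections of degree $\geq 3$ or of several quadrics, this removes at once the del Pezzo families of degree $d\leq 4$ and the Mukai families of genus $g\in\{2,3,4,5\}$. The del Pezzo quintic threefold $V_5$ and the genus $12$ threefolds are excluded because their automorphism groups ($\mathrm{PGL}_2$, or finite) contain no $\G_a^3$, while the $G_2$-variety is excluded because the maximal abelian subalgebra of nilpotents in $\mathfrak{g}_2$ is only three-dimensional, below $\dim=5$. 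For the genus $6$ quadric section of $\Gr(2,5)$, which is generically of finite automorphism but may acquire extra symmetry, I would invoke the structural constraints on Euler-symmetric varieties recorded in \cite{FH3}---that they are cut out by quadrics and carry a prescribed, again Euler-symmetric, variety of minimal rational tangents---to complete the exclusion.

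The step I expect to be the main obstacle is the \emph{sharp} determination of how deep a linear section may be cut before the SEC structure degenerates, and in particular the asymmetry whereby general sections of $\Gr(2,5)$ and of $\BS_5$ remain SEC up to codimension $2$ and $3$ respectively, while no proper linear section of $\Gr(2,6)$ or of $\Lag(6)$ does. This dichotomy is invisible to dimension counts or automorphism bounds; establishing it requires tracking, through the symbol-system formalism of \cite{FH3} together with the $\BP^4$-genericity of \cite{FH2}, precisely when a general hyperplane section through the base point retains a compatible abelian structure with connected, nondegenerate variety of minimal rational tangents, while simultaneously remaining smooth. This persistence-versus-degeneration analysis, rather than the reduction to the finite list, is where the real work lies.
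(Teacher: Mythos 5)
Your reduction to the Fujita--Mella--Mukai--Wi\'sniewski lists and most of your exclusions (finite automorphism groups for hypersurfaces, double covers and complete intersections; the quintic threefold; the $G_2$-variety via an abelian-subalgebra bound) are sound and broadly parallel the paper's. But the decisive step is missing, and you say so yourself: you never actually determine which linear sections of $\BS_5$, $\Gr(2,6)$ and $\Lag(6)$ are SEC, you only identify this as ``where the real work lies.'' The paper does \emph{not} settle this by tracking symbol systems through hyperplane sections, as you propose; that route has no evident way to exclude a special (non-general) smooth section acquiring an additive structure, and in any case you give no criterion for when the structure degenerates. Instead the paper proves two auxiliary results that do the job: (i) Lemma \ref{l.QEL}, using Zak's theorems on joining points of $\BS_5$ and of the Severi variety $\Gr(2,6)$ by quadrics, shows that \emph{every} linear section of $\BS_5$ of codimension $\leq 5$ and of $\Gr(2,6)$ of codimension $\leq 3$ is conic-connected; and (ii) Proposition \ref{p.CC} (due to Hwang), which classifies all conic-connected smooth subvarieties that are SEC by combining the Ionescu--Russo classification with the observation that a prime Fano conic-connected SEC is Euler-symmetric of rank $2$, hence quadratically symmetric and covered by \cite[Theorem 7.13]{FH2}. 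This is the mechanism producing exactly the asymmetry you flag: codimension $\leq 2$ for $\Gr(2,5)$, codimension $\leq 3$ and $\BP^4$-general for $\BS_5$, and nothing proper for $\Gr(2,6)$.

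Two further ingredients you omit are needed to close the argument. First, Proposition \ref{p.index}: a SEC of Picard number one whose VMRT at a general point is smooth has $i_X \geq 3$, because the boundary divisor generates ${\rm Pic}(X)$ and meets a minimal rational curve once, while the VMRT is irreducible and linearly nondegenerate, hence cannot be a finite set. This is what actually eliminates the codimension $3$ and $4$ sections of $\Gr(2,5)$ and forces $n \geq 5$ in the Mukai case, disposing of all proper linear sections of $G_2/P_2$ at once. Second, the smooth hyperplane section of $\Lag(6)$ is a genuine $5$-dimensional Mukai manifold that cannot be excluded by index or by finiteness of automorphisms; the paper invokes Ruzzi's theorem that its ${\rm Aut}^\circ$ is ${\rm SL}_3$, which contains no copy of $\G_a^5$, and your proposal gives no argument for this case. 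Finally, your plan to exclude the Gushel--Mukai (genus $6$) family via ``structural constraints on Euler-symmetric varieties'' is too vague to verify; the paper simply cites Debarre--Kuznetsov for the finiteness of their automorphism groups.
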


Here $\BS_5$ and ${\rm Lag}(6)$ denote the 10-dimensional spinor variety and the 6-dimensional Lagrangian Grassmannian, respectively.
As a corollary of Theorem \ref{t.main}, we obtain the following
\begin{corollary} \label{c.class}
Let $X$ be a SEC $n$-fold of Picard number one. Assume that the VMRT at a general point of $X$ is smooth (e.g. $X$ is covered by lines) and $n \leq 5$. Then
$X$ is isomorphic to one of the following:
$$
\BP^n, \Q^n,\ \text{smooth linear sections of codimension 1 or 2 of }\ {\rm Gr}(2,5).
$$
\end{corollary}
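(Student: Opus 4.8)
The plan is to obtain the corollary as a consequence of Theorem~\ref{t.main}. Note first that a SEC of Picard number one is rational, hence uniruled, so that $-K_X$ is a positive multiple of the ample generator of ${\rm Pic}(X)$ and $X$ is Fano; in particular $i_X$ is defined. Granting for the moment that the hypotheses force $i_X \ge n-2$, Theorem~\ref{t.main} applies and it remains only to keep, from its list, the varieties of dimension at most $5$. This is a pure dimension count: $\BP^n$ and $\Q^n$ have dimension $n\le 5$, and the smooth codimension $1$ and $2$ linear sections of ${\rm Gr}(2,5)$ have dimensions $5$ and $4$; whereas ${\rm Gr}(2,5)$ and ${\rm Lag}(6)$ have dimension $6$, ${\rm Gr}(2,6)$ dimension $8$, $\BS_5$ dimension $10$, and every linear section of $\BS_5$ in (2-b) has dimension $\ge 7$. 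Discarding the latter leaves exactly $\BP^n$, $\Q^n$ and the smooth codimension $1$ or $2$ linear sections of ${\rm Gr}(2,5)$, which is the asserted list. So the entire content is the inequality $i_X \ge n-2$.

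For $n\le 3$ this is automatic, as $i_X\ge 1\ge n-2$ for any Fano manifold; thus I may assume $n\in\{4,5\}$. Here I would work with the variety of minimal rational tangents $\sC_x\subset\BP(T_xX)$ at a general point $x$. The first step is to use the structure theory of equivariant compactifications of vector groups together with the smoothness of $\sC_x$ (cf.\ \cite{FH3}) to conclude that $X$ is covered by lines: a minimal rational curve $\ell$ then has $H\cdot\ell=1$ for the ample generator $H$ of ${\rm Pic}(X)$, so $-K_X\cdot\ell=i_X$, and the standard identity $\dim\sC_x=-K_X\cdot\ell-2$ gives the clean relation $i_X=\dim\sC_x+2$. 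Hence $i_X\ge n-2$ is equivalent to the lower bound $\dim\sC_x\ge n-4$.

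For $n=4$ the inequality $\dim\sC_x\ge 0$ merely records that $\sC_x\neq\emptyset$, which holds as soon as $X$ is covered by lines. The substantial case is $n=5$, where I must exclude a finite (zero-dimensional) VMRT and show $\dim\sC_x\ge 1$. The mechanism I would use is that the acting vector group forces the affine cone $\widehat{\sC_x}$ to carry a nonzero prolongation; since the cone over a finite set of points is a union of lines through the origin, whose prolongation vanishes, a zero-dimensional $\sC_x$ is incompatible with the compactification structure. This yields $\dim\sC_x\ge 1$, hence $i_X\ge 3=n-2$. In fact the Hwang--Mok classification of smooth nondegenerate subvarieties of $\BP^{n-1}$ with nonzero prolongation determines $\sC_x$ completely, in agreement with the refined output of Theorem~\ref{t.main}.

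The main obstacle is precisely this control of the small-index, equivalently low-dimensional VMRT, cases: I must translate the equivariant-compactification structure and the smoothness of $\sC_x$ into the nonvanishing of the prolongation of $\widehat{\sC_x}$, and then invoke the classification of varieties with nonzero prolongation to secure $\dim\sC_x\ge n-4$ for $n\le 5$. Once $i_X\ge n-2$ is established, the rest is the elementary dimension count of the first paragraph followed by a direct application of Theorem~\ref{t.main}.
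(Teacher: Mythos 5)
Your global strategy is the paper's: reduce to the inequality $i_X\ge n-2$, apply Theorem~\ref{t.main}, and discard the entries of dimension $>5$ by a dimension count. The paper packages the inequality as Proposition~\ref{p.index} ($i_X\ge 3$ for any SEC with $\rho_X=1$ and smooth VMRT, which for $n\le 5$ gives $i_X\ge 3\ge n-2$), and your reformulation $i_X=\dim\sC_x+2$ via $D\cdot\ell=1$ is exactly the computation there. The problem is the mechanism you propose for the one substantive step, namely excluding $\dim\sC_x=0$. You argue that $\widehat{\sC_x}$ has nonzero prolongation while ``the cone over a finite set of points is a union of lines through the origin, whose prolongation vanishes.'' That last claim is false. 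For a single line $L=\C e_1\subset V$ one has $\aut(L)=\{A:Ae_1\in\C e_1\}$ and its first prolongation is large and nonzero; for the union of the $n$ coordinate axes, $\aut$ is the diagonal algebra and $\sigma$ defined by $\sigma(e_i,e_i)=c_ie_i$, $\sigma(e_i,e_j)=0$ for $i\ne j$, is a nonzero element of the first prolongation. So nonvanishing of the prolongation does not by itself rule out a finite VMRT, and the contradiction you are after does not materialize. (The classification of varieties with nonzero prolongation that you invoke at the end applies to \emph{irreducible nondegenerate} subvarieties, so you would still need irreducibility and nondegeneracy of $\sC_x$ as an input.)

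The correct and much shorter route, which is what the paper does, is to quote \cite[Proposition 5.4 (ii)]{FH3}: for a SEC of Picard number one the VMRT at a general point is irreducible and linearly nondegenerate. A finite irreducible scheme is a single point, which is linearly degenerate in $\BP T_x(X)=\BP^{n-1}$ for $n\ge 2$; hence $\dim\sC_x\ge 1$ and $i_X\ge 3$. Note also that this gives the bound uniformly in $n$, so no case split between $n=4$ and $n=5$ is needed; for the $n=4$ case you additionally lean on ``$X$ is covered by lines,'' which you do not justify, whereas the paper gets $i_X\ge 2$ directly from \cite[Theorem 2.7]{HT} and $D\cdot\ell=1$ from \cite[Proposition 5.4 (v)]{FH3}. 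With the prolongation step replaced by the irreducibility-plus-nondegeneracy argument, your proof becomes the paper's.
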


It is expected that for a SEC of Picard number one, its VMRT at a general point is always smooth (\cite{FH3}).

To complete the classification of smooth equivariant compactifications of $\mathbb{G}_a^n$ which are Fano manifolds with index $\geq n-2$, it remains to consider Fano $n$-folds with index $\geq n-2$ and Picard number $\rho \geq 2$. The main difficulty lies on Mukai fourfolds with $\rho \geq 2$, which are classified by Wi\'sniewski in \cite{W1}. In the last section,  we will go through this list and finally classify those which can be SEC (Proposition \ref{p.Wisniewski}).
By our results in the present paper, to classify Fano 4-folds which are SEC, the only remaining case is Fano 4-folds of index 1 and with Picard number at least 2.

\vspace{5mm}

{\em Acknowledgements:} The proof of Proposition \ref{p.CC} is due to Jun-Muk Hwang, to whom we are very grateful.  We would like to thank Jaros{\l}aw Wi\'sniewski for pointing out the reference \cite{BW} to us.  We are very grateful to an anonymous referee for the detailed report which helps us to improve greatly the presentation.
Baohua Fu is supported by National Natural Science Foundation of China (No. 11431013 and 11621061). Pedro Montero is supported by National Natural Science Foundation of China (No. 11688101).

\section{Picard number one case}

Let $X$ be a uniruled projective manifold. An irreducible
component $\sK$ of the space of rational curves ${\rm RatCurves}^n(X)$ on $X$ (see \cite[Chap. II, Definition 2.11]{K}) is called {\em a minimal rational component} if the
subvariety $\sK_x$ of $\sK$ parameterizing curves passing through
a general point $x \in X$ is non-empty and proper. Curves
parameterized by $\sK$ will be called {\em minimal rational
curves}. Let $\rho: \sU \to \sK$ be the universal family and $\mu:
\sU \to X$ the evaluation map. The tangent map $\tau: \sU
\dasharrow \BP T(X)$ is defined by $\tau(u) = [T_{\mu(u)}
(\mu(\rho^{-1} \rho(u)))] \in \BP T_{\mu(u)}(X)$. The closure $\sC
\subset \BP T(X)$ of its image is the VMRT structure on $X$. The
natural projection $\sC \to X$ is a proper surjective morphism and
a general fiber $\sC_x \subset \BP T_x(X)$ is called the {\em
variety of minimal rational tangents} (VMRT for short) at the
point $x \in X$. It is well-known that $\dim \mathcal{C}_x = -K_X \cdot \ell -2$, where $\ell \in \mathcal{K}$ is a general minimal rational curve through $x\in X$.


%
\begin{examples} \label{e.IHSS}
An irreducible Hermitian symmetric space of compact type (I.H.S.S. for short) is a
homogeneous space $M= G/P$ with a simple Lie group $G$ and a
maximal parabolic subgroup $P$ such that the isotropy
representation of $P$ on $T_x(M)$ at a base point $x \in M$ is
irreducible.
 The highest weight orbit of the isotropy action on $\BP T_x(M)$
is exactly the VMRT at $x$.  The following table (e.g. \cite[Section 3.1]{FH1}) collects basic information on these varieties.
By \cite{A}, these are all SEC among rational homogeneous manifolds $G/P$ of Picard number one.
\begin{center}
\begin{tabular}{|c| c| c| c| }
\hline Type & I.H.S.S. $M$   &  VMRT  $S$ &  $S \subset \BP
T_x(M)$
\\ \hline I    &  $ \Gr(a, a+b) $ & $\pit^{a-1} \times \pit^{b-1}$
& Segre  \\  \hline II  & $\mathbb{S}_{n}$ & $\Gr(2, n)$  & Pl\"ucker  \\
\hline III & $ \Lag(2n)$ & $\pit^{n-1}$ & Veronese \\  \hline
 IV & $\Q^n$ & $\Q^{n-2}$ & Hyperquadric \\  \hline
 V & $\mathbb{O}\pit^2$ & $\mathbb{S}_{5}$ & Spinor  \\  \hline
 VI & $E_7/(E_6 \times U(1)) $ & $\mathbb{O}\pit^2$ & Severi \\
 \hline
\end{tabular}
\end{center}
\end{examples}

\begin{examples} \label{e.SymGr}
Fix two integers $k \geq 2, m\geq 1$. 
Let $\Sigma$ be an $(m+2k)$-dimensional vector space endowed with a
skew-symmetric 2-form $\omega$ of maximal rank. The symplectic Grassmannian
$M=\Gr_\omega(k, \Sigma)$ is the variety of all $k$-dimensional
isotropic subspaces of $\Sigma$, which is not homogeneous if $m$ is odd.
  Let $W$ and $Q$ be vector spaces of
dimensions $k \geq 2$ and $m$ respectively.
Let $\mathbf{t}$ be the tautological line
bundle over $\pit W$.
The VMRT $\mathcal{C}_x \subset \pit T_x(M)$ of $\Gr_\omega(k, \Sigma)$
at a general point is isomorphic to the projective
bundle $\pit((Q \otimes \mathbf{t}) \oplus \mathbf{t}^{\otimes
2})$ over $\pit W$ with the projective embedding given by
the complete linear system $$H^0(\pit W, (Q \otimes \mathbf{t}^*)
\oplus (\mathbf{t}^*)^{\otimes 2}) = (W \otimes Q)^* \oplus \SYM^2
W^*.$$  Alternatively, $\mathcal{C}_x$ is isomorphic to the blowup of $\pit^{(m+k-1)}$ along some linear space, hence it is a SEC.

\end{examples}

Recall that a subvariety $X \subset \BP V$ is called {\em conic-connected}
if through two general points there passes an irreducible conic.

\begin{proposition}[J.-M. Hwang] \label{p.CC}
Let $X \subset \BP V$ be a conic-connected smooth subvariety. Then $X$ is a SEC if and only if $X \subset \BP V$ is isomorphic to one of the following or their biregular projections:
\begin{itemize} \item[(1)] the VMRT of an irreducible Hermitian symmetric space of compact type;
\item[(2)] the VMRT of a symplectic Grassmannian $\Gr_\omega(k, 2k+m)$ for $1 \leq m, 2\leq k$;
\item[(3)] a nonsingular linear section of $\Gr(2, 5) \subset \BP^9$ of codimension $\leq 2$;
\item[(4)] a $\BP^4$-general linear section of $\mathbb{S}_5 \subset \BP^{15}$ of codimension $\leq 3$.
\end{itemize}
\end{proposition}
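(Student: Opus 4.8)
Since being a SEC is an intrinsic property of the abstract variety $X$ (the existence of a $\G_a^n$-action with an open orbit), whereas both conic-connectedness and the list (1)--(4) are properties of the embedding $X \subset \BP V$, the first point to record is that a biregular projection $X \to X'$ is an abstract isomorphism, so $X$ is a SEC if and only if $X'$ is. It therefore suffices to treat one representative in each projective-equivalence class and then to close the answer up under biregular projection, exactly as the statement does.

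For the \emph{if} direction I would argue family by family. The members of (1) are SEC by Example \ref{e.IHSS} (via \cite{A}), and each VMRT of an I.H.S.S. is conic-connected, being a nondegenerate homogeneous variety of one of the types $\BP^{a-1} \times \BP^{b-1}$, $\Gr(2,n)$, $\nu_2(\BP^{n-1})$, $\Q^{n-2}$, $\BS_5$ or $\mathbb{O}\BP^2$. The members of (2) are SEC by Example \ref{e.SymGr}, where they appear as blow-ups of a projective space along a linear subspace, and their conic-connectedness is read off from the projective-bundle (scroll) description. Finally, for (3) and (4) one exhibits the additive-group action on the linear section directly; conic-connectedness then follows from the fact that a linear section of small codimension of a conic-connected Fano manifold of sufficiently high index still carries a conic through two general points.

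For the \emph{only if} direction the engine is the classification of conic-connected manifolds due to Ionescu--Russo. Up to biregular projection, their theorem exhibits every smooth conic-connected $X \subset \BP V$ as one of an explicit list of models: the quadrics, the Veronese $\nu_2(\BP^n)$, the Segre products $\BP^a \times \BP^b$, scrolls and projective bundles over projective space, $\Gr(2,5)$, the spinor tenfold $\BS_5$, the Severi varieties, and the prime Fano manifolds of large index together with their transverse linear sections. The strongly secant-defective models in this list are precisely the VMRTs of I.H.S.S., so they land in (1); among the scroll models, those that are SEC are identified with the symplectic-Grassmannian VMRTs of Example \ref{e.SymGr} and land in (2). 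It then remains to decide which of the surviving prime Fano models are SEC.

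The main obstacle is this last step: among the prime Fano conic-connected manifolds one must retain exactly the SEC ones and discard the rest. Concretely, I expect the difficulty to concentrate in (a) ruling out the conic-connected manifolds that are never SEC, such as general linear sections of $\Gr(2,m)$ with $m \geq 6$ or sections that drop the index too far, and (b) pinning down the precise constraints in (3) and (4)---the codimension bounds ($\leq 2$ for $\Gr(2,5)$ and $\leq 3$ for $\BS_5$) and the $\BP^4$-general hypothesis. The natural way to settle (b) is to track how the vector-group action on $\Gr(2,5)$, respectively $\BS_5$, restricts to a linear section: the open orbit $\C^n$ survives only when the section is both transverse and shallow enough, and one must show that outside these numerical and genericity constraints the section can no longer be an equivariant compactification of $\G_a^n$. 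The symbol-system formalism of \cite{FH3} for Euler-symmetric varieties, combined with the index bookkeeping from the Fano classification recalled in the introduction, should provide exactly what is needed to make both (a) and (b) precise.
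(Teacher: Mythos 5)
Your plan for the ``only if'' direction contains a genuine gap. You propose to use the Ionescu--Russo classification as an engine that ``exhibits every smooth conic-connected $X \subset \BP V$ as one of an explicit list of models,'' including ``the prime Fano manifolds of large index together with their transverse linear sections,'' and then to sieve that list for SECs. But the theorem actually available (\cite[Theorem 2.2]{IR}, the one the paper invokes) classifies conic-connected manifolds only in the \emph{non}-prime-Fano case; for conic-connected manifolds of Picard number one there is no explicit classification to sieve --- that class contains, for instance, all Fano complete intersections of high enough index, and is far too large to enumerate. So the step ``it then remains to decide which of the surviving prime Fano models are SEC'' has no list of models to start from, and your suggestion to resolve it by ``symbol-system formalism combined with index bookkeeping'' does not identify the mechanism that actually closes this case.

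The paper's argument for the prime Fano case is quite different and is the heart of the proof: a prime Fano SEC is Euler-symmetric by \cite[Corollary 5.6]{FH3}; if $r$ is its rank, then by \cite[Theorem 3.7]{FH3} the $r$-th fundamental form at a general point $x$ is non-zero, which produces a hyperplane section $H$ with ${\rm mult}_x(H)=r$, so every curve through $x$ not contained in $H$ meets $H$ with multiplicity at least $r$. Conic-connectedness makes the conics through $x$ cover $X$, so a general such conic gives $2 = H\cdot C \geq r$, forcing $r=2$; thus $X$ is quadratically symmetric, and quadratically symmetric varieties are classified in \cite[Proposition 7.11, Theorem 7.13]{FH2}, yielding exactly items (3) and (4) together with the homogeneous members of (1). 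This degree-versus-multiplicity argument is the missing idea in your proposal; without it (or a substitute of comparable strength) the codimension bounds and the $\BP^4$-generality condition in (3) and (4) cannot be extracted. Your treatment of the non-prime-Fano case via \cite{IR} is essentially the paper's, except that you should also note that the hyperplane section of a Segre (case (a4) of the Ionescu--Russo list) must be explicitly excluded as a SEC, which the paper does by citing the proof of \cite[Proposition 6.3]{FH1}.
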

\begin{proof}
Assume first that $X$ is not prime Fano, then by \cite[Theorem 2.2]{IR}, $X \subset \BP V$  is projectively equivalent
to one of the following or their biregular projections:

\begin{enumerate} \item[(a1)] the second Veronese embedding of $\BP^n$;

\item[(a2)] the Segre embedding of $\BP^a \times \BP^{n-a}$ for $1
\leq a \leq n-1$.
\item[(a3)] the VMRT of the symplectic
Grassmannian $\Gr_\omega(k, 2k+m)$ for $1 \leq m, 2\leq k$.

\item[(a4)] A hyperplane section of the Segre embedding $\BP^a
\times \BP^{n+1-a}$ with $2 \leq \min\{a, n+1-a\}$.
\end{enumerate}
The case (a4) is not a SEC by the proof of \cite[Proposition 6.3]{FH1}.

Now assume that $X$ is a prime Fano manifold. Then it is an Euler symmetric variety by \cite[Corollary 5.6]{FH3}.
Let $r$ be the rank of $X$. By \cite[Theorem 3.7]{FH3}, the $r$-th fundamental form at a general point $x \in X$ is non-zero. This implies that there exists a hyperplane section $H$ such that ${\rm mult}_x(H) =r$.
Hence for any curve $C \not\subset H$ lying on $X$ through $x$, we have $H \cdot C \geq r$.
By our assumption, $X$ is conic-connected, hence the conics through $x$ cover $X$. Let $C$ be a general such conic,
then we get $2 = H \cdot C \geq r$. This implies that $r=2$, hence $X \subset \BP V$ is quadratically symmetric. Our claim follows then from \cite[Proposition 7.11, Theorem 7.13]{FH2}.
\end{proof}

Recall that for a Fano manifold $X$ of dimension $n$, its {\em index} $i_X$ is the greatest integer such that $-K_X = i_X H$ for some divisor $H$ on $X$.
 By Kobayashi-Ochiai's theorem, we have  $i_X \geq n$ if and only if $X$ is either $\pit^n$ or $\qit^n$.  A Fano manifold $X$ is called {\em del Pezzo} (resp. {\em Mukai}) if $i_X=n-1$  (resp. $i_X = n-2$).

\begin{proposition}\label{p.index}
Let $X$ be a SEC of $\rho_X=1$. Assume that the VMRT at a general point of $X$ is smooth. Then $i_X \geq 3$.
\end{proposition}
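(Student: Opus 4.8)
The plan is to reduce the statement to the single assertion that the VMRT is positive-dimensional, and then to exclude a zero-dimensional VMRT using the additive group action together with the Euler-symmetric structure. First, since $\rho_X=1$ and the complement of the open orbit $O\cong\C^n$ is a nonempty effective divisor, $X$ is a prime Fano manifold; by \cite[Corollary 5.6]{FH3} it is then Euler-symmetric, and in particular covered by lines. Hence a general minimal rational curve $\ell$ is a line, so $H\cdot\ell=1$ for the ample generator $H$ with $-K_X=i_XH$, and therefore
$$\dim\sC_x=-K_X\cdot\ell-2=i_X-2.$$
As $X$ is covered by lines the VMRT $\sC_x$ is nonempty, so $i_X\geq 2$ comes for free, and the desired bound $i_X\geq 3$ is equivalent to $\dim\sC_x\geq 1$. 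Thus everything reduces to ruling out $\dim\sC_x=0$.

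So suppose $\dim\sC_x=0$, i.e. through a general point there pass only finitely many lines. Restricting the universal family $\mu\colon\sU\to X$ over the open orbit, the smoothness of the VMRT makes $\sU|_O\to O$ a finite étale cover; since $O\cong\C^n$ is simply connected this cover is trivial, so I obtain finitely many $\G_a^n$-invariant families of lines, each selecting exactly one line through every point of $O$. By $\G_a^n$-invariance the field of tangent directions of a fixed family is translation-invariant, hence a constant direction $v\in\C^n$; so that family is the foliation of $O$ by the parallel affine lines $x+\C v$, whose closures are lines on $X$.

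Consider a single such family in isolation. Either all of its lines pass through one common boundary point, in which case $X$ is a smooth projective variety swept out by lines through a point, hence the linear space $\BP^n$, contradicting $\dim\sC_x=0$; or the lines are generically disjoint, in which case they are the fibres of a nontrivial fibration of $X$, forcing $\rho_X\geq 2$, again a contradiction. Hence a single family cannot occur by itself, and the remaining possibility is that $\sC_x$ consists of several points, i.e. several of these foliations coexist.

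This last configuration is the main obstacle, and it genuinely occurs among prime Fano manifolds of index $2$: the quintic del Pezzo threefold $V_5$ has three lines through a general point, hence a smooth zero-dimensional VMRT, yet it is not a SEC. The two-ray argument above breaks down here because lines from different families meet inside $O$, so no fibration is produced. To eliminate this case one must use that $X$ is truly an equivariant compactification of a vector group, and not merely a prime Fano manifold covered by lines: concretely, I would appeal to the symbol-system description of \cite{FH3}, in which $\sC_x\subset\BP T_x(X)$ is the common zero locus of the fundamental forms $F_2,\dots,F_r$ with $F_r\neq 0$ (\cite[Theorem 3.7]{FH3}), and show that when this locus is smooth it must be irreducible --- equivalently, that the family of minimal rational curves through a general point is irreducible. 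Granting this, a smooth zero-dimensional VMRT would be a single reduced point, a case already excluded above, and we would conclude $\dim\sC_x\geq 1$, hence $i_X\geq 3$. Establishing this irreducibility is the crux of the argument; note that smoothness of the VMRT is what permits the étale trivialization over the simply connected orbit, while it is the equivariant-compactification structure, not smoothness alone, that rules out the $V_5$-type configuration, consistent with $V_5$ having a smooth $0$-dimensional VMRT yet failing to be a SEC.
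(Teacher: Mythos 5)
Your reduction is the right one --- everything comes down to excluding $\dim\mathcal{C}_x=0$ --- but the proof as written has a genuine gap, which you yourself flag: you never establish that a smooth zero-dimensional VMRT is impossible when it consists of more than one point, and you explicitly defer ``the crux of the argument'' (irreducibility of $\mathcal{C}_x$) to a hoped-for analysis of symbol systems that you do not carry out. The $V_5$ example you raise shows exactly why the finiteness of $\mathcal{C}_x$ cannot be excluded by general Fano/rational-curve arguments, so without the irreducibility input the proof is incomplete. The paper closes precisely this gap by quoting \cite[Proposition 5.4 (ii)]{FH3}, which asserts that for a SEC of Picard number one (with smooth VMRT at a general point) the VMRT is \emph{irreducible and linearly non-degenerate}; irreducibility reduces a finite $\mathcal{C}_x$ to a single point, and linear non-degeneracy in $\BP T_x(X)$ then gives the contradiction directly. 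Note that irreducibility alone would not suffice --- you still need to kill the single-point case --- and the paper does this with non-degeneracy rather than with your more elaborate dichotomy (common focal point versus fibration), which is itself not airtight: the closures of the parallel affine lines $x+\C v$ need not either concur at one boundary point or be fibres of a morphism, and making the ``generically disjoint $\Rightarrow\rho_X\ge 2$'' step rigorous requires the rationally-connected-quotient argument you do not spell out.

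Two smaller points. First, your claim that Euler-symmetry implies $X$ is covered by lines for the ample generator is asserted without justification; the paper instead gets $\ell\cdot D=1$ (hence $i_X=-K_X\cdot\ell$) from \cite[Proposition 5.4 (v)]{FH3}, with $D$ the irreducible boundary divisor generating $\mathrm{Pic}(X)$ by \cite[Theorem 2.5]{HT}. Second, the \'etale trivialization over $O\cong\C^n$ and the constant direction fields are correct but unnecessary once \cite[Proposition 5.4 (ii)]{FH3} is invoked; the paper's proof is a three-line consequence of the two cited statements.
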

\begin{proof}
Let $D \subset X$ be the boundary divisor, which is irreducible since $\rho_X=1$. By \cite[Theorem 2.5]{HT}, we have ${\rm Pic}(X) = \mathbb{Z} D$, hence $-K_X = i_X D$. By \cite[Theorem 2.7]{HT}, we have $i_X \geq 2$.
Let $\ell$ be a minimal rational curve through a general point. Then we have $\ell \cdot D =1$ by \cite[Proposition 5.4 (v)]{FH3}. This implies that $i_X = -K_X \cdot \ell$. Assume that $i_X =2$. Then one has
$-K_X \cdot \ell =2$. Hence there exists only finitely many minimal rational curves passing through a general point, that is, the VMRT $\mathcal{C}_x$ consists of finitely many points for $x \in X$ general.
By \cite[Proposition 5.4 (ii)]{FH3},  $\mathcal{C}_x$ is irreducible and linearly non-degenerate, which yields a contradiction. This gives $i_X \geq 3$.
\end{proof}

\begin{remark}
It is expected that the assumption on the smoothness of the VMRT at a general point of $X$ in Corollary \ref{c.class} and Proposition \ref{p.index} is always fulfilled. See \cite[Conjecture 5.7]{FH3}.
\end{remark}

\begin{proposition} \label{p.DelPezzo}
Let $X$ be an $n$-dimensional del Pezzo manifold of Picard number one.  Then $X$ is a SEC if and only if $X$ is a smooth linear section of ${\rm Gr}(2,5)$ of codimension $\leq 2$.
\end{proposition}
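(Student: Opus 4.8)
The plan is to deduce both implications from the classification of conic-connected smooth SEC in Proposition \ref{p.CC}, applied either to $X$ itself or to its variety of minimal rational tangents.

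For the implication ``$\Leftarrow$'', let $X$ be a smooth linear section of $\Gr(2,5)\subset\BP^9$ of codimension $c\le 2$. Since $\Gr(2,5)$ has dimension $6$ and index $5$, adjunction gives $-K_X=(5-c)H=(\dim X-1)H$, and the Lefschetz hyperplane theorem gives $\rho_X=1$ (note $\dim X=6-c\ge 4$); thus $X$ is a del Pezzo manifold of Picard number one. That such an $X$ is a SEC is precisely case $(3)$ of Proposition \ref{p.CC} (it also appears in Theorem \ref{t.main}).

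For the implication ``$\Rightarrow$'', let $X$ be an $n$-dimensional del Pezzo manifold of Picard number one which is a SEC, so that $i_X=n-1$. I would first arrange that $X\subset\BP V$ is conic-connected in its Euler-symmetric embedding, so that Proposition \ref{p.CC} applies and identifies $X$, up to biregular projection, with one of the four families listed there. As biregular projection leaves the underlying variety unchanged, it then remains to single out, inside that list, the varieties that are del Pezzo of Picard number one, by imposing $\rho_X=1$ and $i_X=\dim X-1$. Among the VMRT of irreducible Hermitian symmetric spaces of Example \ref{e.IHSS}, the Segre products $\BP^{a-1}\times\BP^{b-1}$ have $\rho=2$ and are discarded, while for the remaining ones the indices are $i_{\Gr(2,m)}=m$, $i_{\BP^{m}}=m+1$, $i_{\Q^{m}}=m$, $i_{\BS_5}=8$ and $i_{\mathbb{O}\BP^2}=12$; comparing with $\dim-1$ singles out exactly $\Gr(2,5)$ (the case $m=5$, of dimension $6$, which occurs as the VMRT of $\BS_5$). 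The VMRT of symplectic Grassmannians of Example \ref{e.SymGr} are nontrivial projective bundles over $\BP W$, hence have $\rho=2$ and are discarded. The $\BP^4$-general linear sections of $\BS_5$ of codimension $c\le 3$ satisfy $i=8-c=(10-c)-2=\dim-2$, so they are Mukai and not del Pezzo. Finally family $(3)$ consists of the smooth linear sections of $\Gr(2,5)$ of codimension $\le 2$, which are del Pezzo of Picard number one. Hence $X$ is such a section, as desired.

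The crux, and the step I expect to be the main obstacle, is securing the conic-connectedness used to invoke Proposition \ref{p.CC}: as in the proof of that proposition, conic-connectedness is exactly what forces the rank of the Euler-symmetric variety to equal $2$, and without it the argument does not start. I would establish it either by citing the conic-connectedness of del Pezzo manifolds of Picard number one, or, more robustly, by transferring the whole discussion to the VMRT $\sC_x\subset\BP T_x X$: for a SEC of Picard number one this is again a conic-connected SEC, of dimension $n-3$, so Proposition \ref{p.CC} applies to $\sC_x$, and one matches the outcome against the known VMRT of del Pezzo manifolds (Fujita's classification by degree $d\le 5$), where only degree $5$ yields a VMRT in the list, namely the Segre $\BP^1\times\BP^2$ and its linear sections. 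Two borderline cases need separate care. The quintic del Pezzo threefold $V_5$, i.e. the codimension-$3$ section of $\Gr(2,5)$, has $i_{V_5}=2$ and a finite general VMRT, so it is excluded by Proposition \ref{p.index} rather than by Proposition \ref{p.CC}. Del Pezzo manifolds of degree $d\le 2$ have non-very-ample fundamental divisor $H$, so they cannot carry the Euler-symmetric embedding by the ample generator and are excluded at the outset; alternatively, their VMRT is not of the required type. These last verifications are routine once the conic-connectedness input is in place.
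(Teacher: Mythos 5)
Your ``$\Leftarrow$'' direction is fine and close to the paper's (which quotes \cite[Proposition 2.11]{FH2} to say these sections are Euler-symmetric, hence SEC; note that even to invoke case (3) of Proposition \ref{p.CC} for this direction you would still owe the hypothesis of that proposition, namely that the section is conic-connected). The problem is the ``$\Rightarrow$'' direction, where your route diverges from the paper's. The paper does not try to apply Proposition \ref{p.CC} to $X$ at all: it runs through Fujita's classification of del Pezzo $n$-folds with $\rho_X=1$ --- cubic hypersurfaces, complete intersections of two quadrics, degree-$4$ hypersurfaces in $\BP(2,1,\dots,1)$, degree-$6$ hypersurfaces in $\BP(3,2,1,\dots,1)$, and linear sections of $\Gr(2,5)$ --- and excludes the first four families by showing that $\mathrm{Aut}^\circ(X)$ is trivial (cubics, intersections of quadrics), that $\mathrm{Aut}(X)$ is finite (the quartic double cover, via \cite{LP}), or that the linear system of the ample generator gives a non-birational map, which is impossible for a SEC of Picard number one by \cite[Proposition 5.4 (vi)]{FH3} (the degree-$6$ case). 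The codimension $3$ and $4$ sections of $\Gr(2,5)$ are then removed by Proposition \ref{p.index}, as you also do.

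The gap in your argument is exactly the step you flag as the crux, and the repairs you propose do not work. Proposition \ref{p.CC} requires $X$ to be given as a conic-connected smooth subvariety of some $\BP V$. For the Fujita families this hypothesis is either unavailable or false: the degree-$1$ and degree-$2$ del Pezzo manifolds are not embedded by the fundamental divisor at all, and your dismissal of them because ``$H$ is not very ample, so they cannot carry the Euler-symmetric embedding'' is not an exclusion --- nothing established so far forces a SEC to be Euler-symmetrically embedded by $H$; what is actually needed is the concrete criterion \cite[Proposition 5.4 (vi)]{FH3} (non-birationality of the map given by $|H|$) or a computation of the automorphism group, which is what the paper supplies. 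For the cubics and the intersections of two quadrics, conic-connectedness is asserted but not proved, and it is not a citable blanket fact for del Pezzo manifolds of Picard number one (in low dimension a parameter count gives only finitely many candidate conics through two general points, with no existence guarantee, and \cite{IR} imposes an index bound on prime Fano conic-connected manifolds that del Pezzo threefolds violate). Finally, the fallback of applying Proposition \ref{p.CC} to the VMRT rests on several unproven claims --- that the VMRT of a SEC is again a SEC, that it is conic-connected in $\BP T_x X$, and that the VMRT determines $X$ within Fujita's list --- none of which is available in the paper, and for the families of degree $\le 2$ the VMRT is not even under control since these varieties are not covered by lines.
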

\begin{proof}
As $\rho_X=1$,  $X$ is isomorphic to one of the following by Fujita's classification (\cite{F1},\cite{F2}, \cite{F3}),
\begin{itemize}
\item[(1)] cubic hypersurface in $\pit^{n+1}$;
\item[(2)] complete intersection of 2 quadrics in $\pit^{n+2}$;
\item[(3)] a hypersurface of degree 4 in $\pit(2,1, \ldots, 1)$;
\item[(4)]  a hypersurface of degree 6 in $\pit(3, 2,1, \ldots, 1)$;
\item[(5)] smooth linear sections of ${\rm Gr}(2,5)$;
\end{itemize}

It's well-known that for $X$ in (1) and (2), the group ${\rm Aut}^\circ (X)$ is trivial, hence $X$ is not a SEC.  In case (3), it is a double cover of $\pit^n$ ramified along a smooth hypersurface of degree 4, hence its automorphism group is finite by \cite[Theorem 4.5]{LP}.  In case (4), 
the linear system of the ample generator of ${\rm Pic}(X)$ gives a rational map $X \dasharrow \BP^{n-1}$ by \cite{F3}, which is not birational, hence
it is not a SEC by \cite[Proposition 5.4 (vi)]{FH3}. In case (5), any smooth linear section of $X$ of codimension $\leq 2$ is a SEC by \cite[Proposition 2.11]{FH2} since it is Euler symmetric. By Proposition \ref{p.index}, we have $i_X = n-1 \geq 3$, hence $n \geq 4$. This shows that the smooth linear sections of ${\rm Gr}(2,5)$ of codimension 3 and 4 are not SEC.
\end{proof}

\begin{lemma}[{\cite[Satz 8.11]{F}}] \label{l.Flenner}
Let $X$ be a smooth variety of dimension $n$, which is a complete intersection in a weighted projective space.  Then $H^p(X, \Omega_X^q(t))=0$ if $p+q < n$ and $t < q-p$.
\end{lemma}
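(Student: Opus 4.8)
The plan is to deduce the vanishing on $X$ from the corresponding Bott-type vanishing on the ambient weighted projective space $\mathbb{P}=\mathbb{P}(a_0,\dots,a_N)$, where $N=n+c$, $c$ is the codimension, and $d_1,\dots,d_c$ are the degrees of the defining equations. The input I would take for granted is the weighted analogue of Bott's formula (due to Dolgachev and Steenbrink): for the sheaf of Zariski differentials one has $H^P(\mathbb{P},\Omega_{\mathbb{P}}^Q(T))=0$ whenever $P+Q<N$ and $T<Q-P$. A direct comparison with Bott's three non-vanishing ranges on $\mathbb{P}^N$ shows that the region $\{P+Q<N,\ T<Q-P\}$ lies entirely inside the vanishing locus, which is why the same numerical shape will reappear on $X$. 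Since $X$ is smooth and may be assumed to lie in the smooth locus of $\mathbb{P}$, the conormal sequence and the Koszul complex below will be exact and compatible with the Zariski differentials. One harmless normalization is needed at the outset: any defining equation of degree $1$ can be eliminated together with a weight-one variable, reducing both $N$ and $c$ by one while keeping $\dim X=n$, so I may and do assume $d_i\ge 2$ for all $i$; this is exactly what will make the numerics close.

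The argument I propose is a single induction on the form degree $q$. Fix $(p,q,t)$ with $p+q<n$ and $t<q-p$, and assume the statement for all form degrees $<q$ on $X$. The first step controls the restricted forms $\Omega_{\mathbb{P}}^q|_X$. Resolving $\mathcal{O}_X$ by the Koszul complex of the $c$ equations and tensoring with the locally free $\Omega_{\mathbb{P}}^q(t)$ yields a resolution of $\Omega_{\mathbb{P}}^q|_X(t)$ whose terms are $\bigoplus_{|I|=j}\Omega_{\mathbb{P}}^q\big(t-\sum_{i\in I}d_i\big)$. The associated hypercohomology spectral sequence shows that $H^p(X,\Omega_{\mathbb{P}}^q|_X(t))=0$ as soon as $H^{p+j}\big(\mathbb{P},\Omega_{\mathbb{P}}^q(t-\sum_{i\in I}d_i)\big)=0$ for all $0\le j\le c$ and all $I$ with $|I|=j$. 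For these ambient groups the parameters $(p+j,\,q,\,t-\sum_I d_i)$ satisfy $p+j+q<n+c=N$ (as $j\le c$) and, using $\sum_I d_i\ge j$ together with $t<q-p$, also $t-\sum_I d_i<q-(p+j)$; hence they vanish by the ambient input. This disposes of the restricted forms for every $q$ with no recursion, and in particular settles the base case $q=0$, where $\Omega_X^0=\mathcal{O}_X=\mathcal{O}_{\mathbb{P}}|_X$.

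The second step passes from $\Omega_{\mathbb{P}}^q|_X$ to $\Omega_X^q$ via the conormal sequence $0\to N^*\to\Omega_{\mathbb{P}}^1|_X\to\Omega_X^1\to 0$ with $N^*=\bigoplus_i\mathcal{O}_X(-d_i)$. Taking $q$-th exterior powers gives a filtration of $\Omega_{\mathbb{P}}^q|_X$ whose top quotient is $\Omega_X^q$ and whose remaining graded pieces are $\wedge^aN^*\otimes\Omega_X^{q-a}=\bigoplus_{|I|=a}\Omega_X^{q-a}(-\sum_{i\in I}d_i)$ for $a\ge1$. Writing the resulting short exact sequence $0\to F\to\Omega_{\mathbb{P}}^q|_X\to\Omega_X^q\to 0$, the long exact sequence together with the vanishing of $H^p(X,\Omega_{\mathbb{P}}^q|_X(t))$ from the first step embeds $H^p(X,\Omega_X^q(t))$ into $H^{p+1}(X,F(t))$, and the filtration of $F$ reduces the claim to $H^{p+1}\big(X,\Omega_X^{q-a}(t-\sum_{i\in I}d_i)\big)=0$ for all $1\le a\le q$ and $|I|=a$. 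Each of these has form degree $q-a<q$, so the inductive hypothesis applies with parameters $(p+1,\,q-a,\,t-\sum_I d_i)$: indeed $(p+1)+(q-a)=p+q+1-a<n$ since $a\ge1$, and $t-\sum_I d_i<(q-a)-(p+1)$ precisely because $\sum_I d_i\ge 2a\ge a+1$, which is where the normalization $d_i\ge2$ enters.

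I expect the main obstacle to be exactly this last numerical point. Reducing the form degree through the conormal filtration simultaneously raises $p$ by one and lowers the twist, and at the extremal allowed value $t=q-p-1$ the induction closes only if every partial degree sum $\sum_{i\in I}d_i$ strictly exceeds $|I|$; if some equation had degree $1$ this would fail at the boundary. This is the structural reason for first absorbing all degree-one equations into a smaller weighted projective space, after which the displayed inequalities become strict and the induction goes through. The remaining work is routine: the bookkeeping of the two spectral sequences, and checking that the Koszul complex, the conormal sequence, and the weighted Bott vanishing are all valid for the Zariski differentials along the smooth locus of $\mathbb{P}$ containing $X$.
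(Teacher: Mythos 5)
The paper offers no proof of this lemma at all: it is quoted verbatim from Flenner [F, Satz 8.11], so there is nothing internal to compare your argument against. Your reconstruction is, in substance, the standard proof (and essentially Flenner's own): reduce to Bott--Steenbrink--Dolgachev vanishing on the weighted projective space via the Koszul resolution of $\mathcal{O}_X$ (to control $\Omega^q_{\mathbb{P}}|_X$) and the exterior-power filtration of the conormal sequence (to descend to $\Omega^q_X$ by induction on the form degree). Your numerics are right: Step 1 needs only $\sum_{i\in I}d_i\ge |I|$, while Step 2 at the extremal value $t=q-p-1$ needs $\sum_{i\in I}d_i\ge |I|+1$, which is exactly why the degree-one equations must be absorbed first --- you have correctly identified the one genuine pressure point of the induction. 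Two things would need to be said more carefully in a full write-up. First, the blanket assumption that $X$ lies in the smooth locus of $\mathbb{P}$ is not a consequence of the smoothness of $X$ (a weight-one linear section can pass through singular points of $\mathbb{P}$, e.g. $\{x_0=0\}\subset\mathbb{P}(1,1,2)$); your degree-one reduction removes the most obvious such examples, but in general one should either work with Zariski (reflexive) differentials throughout, as Flenner does, or impose a quasi-smoothness/general-position hypothesis --- note that off the smooth locus $\tilde\Omega^Q_{\mathbb{P}}(T)\otimes\mathcal{O}(S)$ need not agree with $\tilde\Omega^Q_{\mathbb{P}}(T+S)$, so the twists appearing in the Koszul complex require this care. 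Second, the weighted Bott vanishing you take as input should be cited in a form valid for arbitrary weights (Dolgachev's formulation for the sheaves $\tilde\Omega^Q$); granting that reference, the rest of your bookkeeping is correct.
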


\begin{corollary} \label{c.Mukai}
Let $X$ be a Mukai manifold of $\rho_X=1$. Assume that $X$ is a complete intersection in a weighted projective space. Then ${\rm Aut}(X)$ is discrete.
\end{corollary}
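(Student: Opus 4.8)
The plan is to prove that ${\rm Aut}(X)$ is discrete by showing that its Lie algebra, the space $H^0(X, T_X)$ of global holomorphic vector fields, vanishes; since ${\rm Aut}(X)$ is a group scheme locally of finite type over $\C$ with tangent space $H^0(X,T_X)$ at the identity, this forces ${\rm Aut}^\circ(X)$ to be a point. The first step is purely formal: the perfect pairing $\Omega^1_X \otimes \Omega^{n-1}_X \to \Omega^n_X = \omega_X$ identifies $\Omega^{n-1}_X \cong T_X \otimes \omega_X$, and therefore $T_X \cong \Omega^{n-1}_X \otimes \omega_X^{-1}$. Hence $H^0(X, T_X) = H^0(X, \Omega^{n-1}_X \otimes \omega_X^{-1})$, and the task reduces to a vanishing statement for a twist of $\Omega^{n-1}_X$ to which Lemma \ref{l.Flenner} can be applied.

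Next I would express $\omega_X^{-1} = -K_X$ as a twist of the tautological sheaf $\sO_X(1)$, i.e.\ the restriction to $X$ of $\sO_{\BP}(1)$ on the ambient weighted projective space. For a complete intersection $X = X_{d_1, \dots, d_c} \subset \BP(a_0, \dots, a_N)$ the weighted adjunction formula gives $\omega_X \cong \sO_X\!\left(\sum_i d_i - \sum_j a_j\right)$, so $-K_X = \sO_X(t_0)$ with $t_0 = \sum_j a_j - \sum_i d_i$, a positive integer because $X$ is Fano. Since $X$ is Mukai with $\rho_X = 1$ we have $-K_X = (n-2)H$ for the ample generator $H$ of ${\rm Pic}(X)$; writing $\sO_X(1) = kH$ with $k \geq 1$ (as $\sO_X(1)$ is an ample line bundle on the smooth variety $X$) yields $t_0\, k = n-2$, and in particular $t_0 \leq n-2$.

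Finally I would feed this into Flenner's vanishing (Lemma \ref{l.Flenner}) with $p = 0$, $q = n-1$ and $t = t_0$. Both hypotheses hold: $p + q = n-1 < n$, and $t = t_0 \leq n-2 < n-1 = q - p$. This gives $H^0(X, \Omega^{n-1}_X(t_0)) = 0$, hence $H^0(X, T_X) = 0$, so ${\rm Aut}(X)$ is discrete.

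The step demanding the most care is the bookkeeping of the previous paragraph: one must be sure that the twist in $T_X \cong \Omega^{n-1}_X(t_0)$ is precisely by the sheaf $\sO_X(1)$ appearing in Flenner's statement, and that $t_0$ lands strictly below the threshold $q - p = n-1$. The inequality is tight — the extremal value $t_0 = n-2$ occurs exactly when $\sO_X(1) = H$ (the generic case, where Grothendieck--Lefschetz forces $k=1$) — but since $n-2 < n-1$ always, the vanishing goes through regardless of the value of $k$. Everything else in the argument is formal.
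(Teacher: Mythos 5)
Your proof is correct and follows essentially the same route as the paper: identify $T_X \cong \Omega_X^{n-1}\otimes \omega_X^{-1}$ and apply Flenner's vanishing (Lemma \ref{l.Flenner}) with $p=0$, $q=n-1$. The only difference is that you track the possibility $\sO_X(1)=kH$ with $k\geq 1$ rather than assuming $-K_X=\sO_X(n-2)$ outright, which is a harmless (and slightly more careful) refinement of the paper's one-line argument.
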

\begin{proof}
By assumption, we have $-K_X = \0_X(n-2)$.  As $T_X \simeq \Omega_X^{n-1} \otimes K_X^* = \Omega_X^{n-1}(n-2)$, we have $H^0(X, T_X)=0$ by Lemma \ref{l.Flenner}.
\end{proof}

The following lemma is well-known for {\em general} sections, but we need it for any  (smooth) section.

\begin{lemma} \label{l.QEL}
Any linear section of codimension $\leq 5$ (resp. $\leq 3$) of $\mathbb{S}_5 \subset \pit^{15}$ (resp. ${\rm Gr}(2, 6) \subset \pit^{14}$)
 is conic-connected.
\end{lemma}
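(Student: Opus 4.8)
The plan is to exploit the fact that both $\mathbb{S}_5 \subset \BP^{15}$ and $\Gr(2,6) \subset \BP^{14}$ are quadratic entry locus varieties with large secant defect, and then to connect two general points of the section by intersecting a single entry quadric with the linear space $\Lambda$. Recall that for two general points $p,q$ of a nondegenerate $X \subset \BP V$ the entry locus $\Sigma_{p,q}$ is the closure of the union of the irreducible conics on $X$ through $p$ and $q$. For $X = \Gr(2,6)$, writing $p=[A]$, $q=[B]$ with $A \cap B = 0$, the general entry locus is the sub-Grassmannian $\Gr(2, A \oplus B) \cong \Q^4$, a smooth $4$-dimensional quadric spanning a $\BP^{5}$; for $X = \mathbb{S}_5$ the general entry locus is a sub-spinor variety $\mathbb{S}_4 \cong \Q^6$, a smooth $6$-dimensional quadric spanning a $\BP^{7}$. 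Thus $X$ is a quadratic entry locus variety of type $\delta = 4$ (resp. $\delta = 6$), and the codimension bounds in the statement are precisely $c \leq \delta - 1$.

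For the core of the argument, let $Y = X \cap \Lambda$ be a smooth linear section with $\Lambda$ of codimension $c \leq \delta - 1$, and let $p, q \in Y$ be two general points. Granting for the moment that $(p,q)$ is a general pair of $X$ (addressed below), the entry locus $\Sigma_{p,q}$ is a smooth $\delta$-dimensional quadric, contained in $X$ and spanning a linear subspace $\langle \Sigma_{p,q}\rangle \cong \BP^{\delta+1}$. Since $p, q \in \Lambda$, the trace $L := \langle\Sigma_{p,q}\rangle \cap \Lambda$ is a linear subspace of $\BP^{\delta+1}$ of codimension at most $c$, hence of dimension at least $\delta + 1 - c \geq 2$, and it contains $p$ and $q$. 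Intersecting the quadric $\Sigma_{p,q}$ with $L$ then yields a quadric
$$ \Sigma_{p,q} \cap L \;=\; \Sigma_{p,q} \cap \Lambda \;\subseteq\; X \cap \Lambda = Y $$
of dimension at least $\delta - c \geq 1$ passing through $p$ and $q$. A quadric of positive dimension through two points always contains a conic through them, and for general $p,q$ this conic is irreducible; as it lies on $Y$, this proves that $Y$ is conic-connected.

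The point that requires care — and the reason the statement must be proved for an arbitrary, not merely general, smooth section — is the claim that a general pair $(p,q) \in Y \times Y$ is a general pair of $X$, i.e. $A \cap B = 0$ for $X = \Gr(2,6)$, resp. $\dim(U \cap U') = 1$ for $X = \mathbb{S}_5$. For a \emph{general} $\Lambda$ this is automatic, since the general section is then itself a quadratic entry locus variety of type $\delta - c \geq 1$; the difficulty is genuinely the arbitrary smooth $\Lambda$. Since $X$ is homogeneous, the over-incident pairs form a proper closed $G$-invariant subset $\Delta \subset X \times X$, so it suffices to show $Y \times Y \not\subseteq \Delta$. For this I would bound the dimension of a maximal subvariety $Z \subseteq X$ all of whose pairs are over-incident: for $\Gr(2,6)$ a family of $2$-planes meeting pairwise is classically either the planes through a fixed line or the planes in a fixed $3$-space, so $\dim Z \leq 4$; for $\mathbb{S}_5$ the analogous over-intersecting families — forced by a common isotropic $2$-plane, using the parity of $\dim(U \cap U')$ — are sub-spinor varieties of dimension at most $3$. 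Since $Y$ is smooth, the Lefschetz theorem gives that it is irreducible of dimension $n - c \geq 5$ throughout both ranges, hence strictly larger than these bounds; therefore $Y \not\subseteq Z$ for any such $Z$, the general pair of $Y$ is a general pair of $X$, and the computation of the previous paragraph applies. I expect this last dimension estimate on over-incident families, where the homogeneous geometry of the two varieties must be used, to be the main obstacle.
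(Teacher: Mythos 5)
Your strategy is viable and, for the $\Gr(2,6)$ half, is essentially the paper's own argument in different clothing: there, being ``over-incident'' ($A\cap B\neq 0$) is exactly the condition that the line $\overline{pq}$ in the Pl\"ucker embedding lies on $X$, so a subvariety all of whose pairs are over-incident is a linear subspace contained in $X$, and the paper concludes by noting that $\Gr(2,6)$ contains no $\BP^5$ (your bound $\dim Z\leq 4$ is the same fact). The reduction of the generic pair of $Y$ to a generic pair of $X$, the intersection $\Sigma_{p,q}\cap\Lambda$, and the codimension count $c\leq\delta-1$ are all correct.

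The genuine gap is in the $\mathbb{S}_5$ case, precisely at the step you flag as ``the main obstacle'': your claimed classification of over-incident families is not proved, and as stated it is false. For $U\neq U'$ in the same spinor family, $\dim(U\cap U')=3$ is exactly the condition that $[U],[U']$ are joined by a line on $\mathbb{S}_5$, so an over-incident family is again a linear subspace contained in $\mathbb{S}_5$. Besides the $\BP^3$'s of maximal isotropics containing a fixed isotropic $2$-plane that you describe, $\mathbb{S}_5$ contains linear subspaces $\BP^4$, namely $\{U:\dim(U\cap V)=4\}$ for $V$ a maximal isotropic of the \emph{opposite} family; any two members meet in dimension $\geq 4+4-5=3$, so these are over-incident families of dimension $4$, not $3$. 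Your argument survives only because $\dim Y\geq 10-5=5>4$, but with the bound you assert the step is both unjustified and numerically wrong, and the margin is tighter than you believe. Note that the paper sidesteps this issue entirely for $\mathbb{S}_5$ by quoting Zak (Chap.~III, Prop.~2.19): \emph{any} two points of $\mathbb{S}_5$ lie on a smooth $6$-dimensional quadric contained in $\mathbb{S}_5$, so no genericity transfer from $Y$ to $X$ is needed there; the delicate ``arbitrary smooth section'' discussion is only required for $\Gr(2,6)$, where Zak's statement excludes pairs whose joining line lies on $X$. If you want to keep your uniform QEL formulation, you must actually prove that every subvariety of $\mathbb{S}_5$ (resp. $\Gr(2,6)$) all of whose pairs are over-incident has dimension at most $4$, e.g.\ via the identification with linear subspaces on $X$ and the known list of maximal linear subspaces of these two homogeneous varieties.
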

\begin{proof}
By \cite[Chap. III, Proposition 2.19]{Z}, any two points of $\mathbb{S}_5 \subset \pit^{15}$ can be joined by a smooth quadric of dimension 6 contained in $\mathbb{S}_5$, which implies the first assertion. Consider the Severi variety $X={\rm Gr}(2, 6) \subset \pit^{14}$. By \cite[Chap. IV, Theorem 2.4 (b)]{Z} any two points $x, y \in X$ such that the line $\overline{xy}$ is not on $X$ are joined by a smooth quadric in $X$ of dimension 4. 
 Take any linear section $X'$ of $X$ of codimension $\leq 3$. If for two general points $x', y' \in X'$, the line $\overline{x'y'}$ is not on $X$, then 
 $x', y'$ are joined by a $\qit^4$ on $X$, and therefore they are joined by a conic on $X'$, which implies that $X'$ is conic-connected. Now assume that 
for  $x', y' \in X'$ general, the line  $\overline{x'y'}$ is on $X$, then it is also contained in $X'$ as $X'$ is a linear section of $X$. This implies that $X'$ is a projective space of dimension at least 5, which is not possible
since $X$ does not contain any $\pit^5$. This concludes the proof.
\end{proof}

\begin{proposition}\label{p.Mukai}
Let $X$ be an $n$-dimensional Mukai variety with $\rho_X=1$. Then $X$ is a SEC if and only if $X$ is one of the following
\begin{itemize}
\item[(i)] a $\pit^4$-general linear section of the 10-dimensional spinor variety $\mathbb{S}_5$ of codimension $\leq 3$.
\item[(ii)] the 8-dimensional Grassmannian ${\rm Gr}(2, 6)$.
\item[(iii)] the 6-dimensional Lagrangian Grassmannian ${\rm Lag}(6)$.
\end{itemize}
\end{proposition}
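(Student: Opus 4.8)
The plan is to go through Mukai's classification of prime Fano manifolds of coindex $3$ family by family, deciding membership in the SEC list by means of Proposition \ref{p.CC} whenever the variety can be shown to be conic-connected, and by Example \ref{e.IHSS}, Corollary \ref{c.Mukai} and the index estimate of Proposition \ref{p.index} otherwise. By \cite{M} (together with \cite{Me}, \cite{W1}), a Mukai $n$-fold $X$ with $\rho_X=1$ is, according to its genus $g$: a complete intersection in a weighted projective space ($g\le 5$); a Gushel--Mukai variety, i.e. a quadric section of a cone over $\mathrm{Gr}(2,5)$ ($g=6$); a linear section of $\mathbb{S}_5$ ($g=7$), of $\mathrm{Gr}(2,6)$ ($g=8$), of $\mathrm{Lag}(6)$ ($g=9$), or of the $5$-dimensional $G_2$-variety $G_2/P$ ($g=10$); or the threefold $V_{22}$ ($g=12$).

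First I would dispose of the families that never produce a SEC. For $g\le 5$ the variety is a complete intersection in a weighted projective space, so $\mathrm{Aut}(X)$ is discrete by Corollary \ref{c.Mukai} and $X$ is not a SEC; and $V_{22}$ has $i_X=1$, excluded by $i_X\ge 2$ (\cite[Theorem 2.7]{HT}). Moreover, since a Mukai $n$-fold has $i_X=n-2$, the argument of Proposition \ref{p.index} (a SEC of Picard number one satisfies $i_X\ge 2$, and $i_X=2$ would force the VMRT to be a finite set, against its irreducibility and linear nondegeneracy) gives $i_X\ge 3$, that is $n\ge 5$. Hence in each remaining genus I may discard every section of dimension $\le 4$ and am left with finitely many candidates.

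For the positive part and the bulk of the converse I would use conic-connectedness. The symmetric spaces $\mathrm{Gr}(2,6)$ and $\mathrm{Lag}(6)$ are SEC by Example \ref{e.IHSS}, giving (ii) and (iii), and a $\mathbb{P}^4$-general section of $\mathbb{S}_5$ of codimension $\le 3$ is conic-connected by Lemma \ref{l.QEL}, hence a SEC by Proposition \ref{p.CC}(4), giving (i). Conversely, Lemma \ref{l.QEL} shows that every section of $\mathbb{S}_5$ of codimension $\le 5$ and of $\mathrm{Gr}(2,6)$ of codimension $\le 3$ is conic-connected; after the index reduction these are precisely the genus-$7$ and genus-$8$ Mukai varieties, so Proposition \ref{p.CC} applies to each. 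Intersecting its list with the constraints $\rho_X=1$ and $i_X=n-2$ removes item (2) (projective bundles have $\rho=2$) and item (3) ($\mathrm{Gr}(2,5)$-sections are del Pezzo, of index $n-1$), and leaves in item (1) only $\mathbb{S}_5$ (the VMRT of $\mathbb{OP}^2$) and $\mathrm{Gr}(2,6)$ (the VMRT of $\mathbb{S}_6$). This singles out exactly the varieties in (i) and (ii).

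The delicate point, as I see it, is the short list of varieties of index $3$ (and the one of index $4$) that Lemma \ref{l.QEL} does not reach: the genus-$6$ Gushel--Mukai varieties, a hyperplane section of $\mathrm{Lag}(6)$, and the $G_2$-variety. The last is homogeneous but not an IHSS, hence not a SEC by \cite{A}. For the rest I would proceed along one of two lines. The first extends Lemma \ref{l.QEL}: by a Zak-type argument (cf. \cite{Z}) one shows that two general points lie on a quadric contained in $X$, so $X$ is conic-connected, and then Proposition \ref{p.CC} excludes it since a genus or index comparison forbids a match with its list. The second is intrinsic: for a SEC of Picard number one and index $3$ the VMRT at a general point is, by \cite[Proposition 5.4]{FH3}, an irreducible and linearly nondegenerate curve in $\mathbb{P}^4$, whereas for these varieties it degenerates (for the $G_2$-variety, for instance, it is a twisted cubic spanning only a $\mathbb{P}^3$), which is the desired contradiction. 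The recurring difficulty, and the reason Lemma \ref{l.QEL} is phrased for arbitrary smooth sections, is that each exclusion must hold uniformly over all smooth members of a family, not just the general one.
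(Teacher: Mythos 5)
Your overall architecture agrees with the paper's: run through Mukai's classification, eliminate the weighted complete intersections with Corollary \ref{c.Mukai}, use Proposition \ref{p.index} to force $i_X=n-2\geq 3$ (hence $n\geq 5$) in the families covered by lines, and settle the $\mathbb{S}_5$ and ${\rm Gr}(2,6)$ families by Lemma \ref{l.QEL} together with Proposition \ref{p.CC}; your exclusion of $G_2/P_2$ via \cite{A} (homogeneous of Picard number one but not Hermitian symmetric) is a legitimate variant of the paper's argument, which instead observes that its VMRT is linearly degenerate.

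The gap lies precisely in the two cases you yourself flag as delicate --- the Gushel--Mukai varieties and the hyperplane section of ${\rm Lag}(6)$ --- and neither of your two proposed substitutes closes them. The ``intrinsic'' route fails because the linear degeneracy you assert does not hold: for a smooth hyperplane section of ${\rm Lag}(6)$ the VMRT at a general point is $v_2(\BP^2)\cap \BP^4=v_4(\BP^1)$, a rational normal quartic spanning all of $\BP T_x(X)=\BP^4$, hence irreducible and linearly nondegenerate, so \cite[Proposition 5.4 (ii)]{FH3} yields no contradiction; similarly the VMRT of a Gushel--Mukai fivefold is a sextic curve in $\BP^4$ (a quadric section of a hyperplane section of the Segre $\BP^1\times\BP^2\subset\BP^5$) which is in general nondegenerate. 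The conic-connectedness route is not established either: the mechanism of Lemma \ref{l.QEL} needs two general points to be joined by a quadric of large dimension so that cutting by hyperplanes still leaves a conic, but ${\rm Sec}({\rm Lag}(6))$ is a quartic hypersurface in $\BP^{13}$, so the entry locus of ${\rm Lag}(6)$ is only a conic and the argument collapses after one hyperplane section; for Gushel--Mukai varieties (which moreover include the special ones, double covers not embedded in ${\rm Gr}(2,5)$) no analogous statement is proved. The paper closes these two cases with specific external inputs: smooth Gushel--Mukai varieties have finite automorphism groups by \cite[Proposition 3.19 (c)]{DK}, and a smooth hyperplane section of ${\rm Lag}(6)$ is the compactification of a symmetric variety with ${\rm Aut}^\circ={\rm SL}_3$ by \cite[Theorem 3]{R}, which contains no copy of $\G_a^5$. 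Without a replacement for these two facts your proof is incomplete.
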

\begin{proof}
When $n=3$, then $X$ is isomorphic to $\BP^3$ or $\Q^3$ by \cite{HT}.  Now assume  $n \geq 4$. By Mukai's classification \cite{M}, $X$ is either a complete intersection in a weighted projective space, or a smooth linear section of one of the following varieties:
\begin{itemize}
\item[(a)] a quadric section of the cone over ${\rm Gr}(2,5) \subset \pit^9$;
\item[(b)] the 5-dimensional Fano contact manifold $G_2/P_2 \subset \pit^{13}$;
\item[(c)] the 6-dimensional Lagrangian Grassmannian ${\rm Lag}(6) \subset \pit^{13}$;
\item[(d)] the 10-dimensional spinor variety $\mathbb{S}_5 \subset \pit^{15}$;
\item[(e)] the 8-dimensional Grassmannian ${\rm Gr}(2, 6) \subset \pit^{14}$.
\end{itemize}

By Corollary \ref{c.Mukai}, we only need to consider  cases (a)-(e).  In case (a), the smooth linear sections of $X$ are called {\em Gushel-Mukai varieties}. By \cite[Proposition 3.19 (c)]{DK}, they have finite automorphism groups, hence they are not SEC.

For the remaining cases, $X$ is covered by lines, so it has smooth VMRT at general points. By Proposition \ref{p.index}, we have $i_X = n-2 \geq 3$, hence $n \geq 5$.

In case (b), its VMRT at a general point is linearly degenerate, hence it cannot be a SEC by \cite[Proposition 5.4 (ii)]{FH3}.

In case (c), a smooth hyperplane section $X$ of ${\rm Lag}(6)$ is a compactification of a symmetric variety with ${\rm Aut}^\circ = {\rm SL}_3$ by \cite[Theorem 3]{R}. As ${\rm SL}_3$ does not contain any subgroup isomorphic to $\mathbb{G}_a^5$,  the variety $X$ is not a SEC. Hence only
${\rm Lag}(6)$ itself is a SEC.

In cases (d) and (e), $X$ is conic-connected by Lemma \ref{l.QEL}, hence by Proposition \ref{p.CC},  $X$ is as in (i) and (ii).

\end{proof}

This concludes the proof of Theorem \ref{t.main}. Now Corollary \ref{c.class} follows by virtue of Proposition \ref{p.index}.

\section{Higher Picard number case}

\begin{proposition}\label{p.highIndex}
Let $X$ be a Fano manifold of dimension $n$ with index $i_X \geq (n+1)/2$. If $\rho_X \geq 2$, then $X$ is a SEC if and only if it is one of the following:

$$(\ast)\quad \pit^{\frac{n}{2}} \times \pit^{\frac{n}{2}}, \quad \pit^{\frac{n-1}{2}} \times \qit^{\frac{n+1}{2}}, \quad \pit_{\pit^{\frac{n+1}{2}}} (\0(1) \oplus \0^{\frac{n-1}{2}}). $$
\end{proposition}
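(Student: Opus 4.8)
The plan is to reduce the statement to a finite, explicit list of candidate varieties by invoking the classification of Fano manifolds of large index, and then to decide the SEC property member by member; the only delicate point will be the elimination of the projectivized tangent bundle. First I would apply Wiśniewski's classification of Fano $n$-folds with $\rho_X\geq 2$ and large index (\cite{W1}, cf. \cite{BW}). Since $i_X$ is an integer, the hypothesis $i_X\geq (n+1)/2$ splits by the parity of $n$. If $n$ is even it forces $i_X\geq (n+2)/2>(n+1)/2$, and the classification yields $X\cong\mathbb{P}^{n/2}\times\mathbb{P}^{n/2}$. If $n$ is odd, then necessarily $i_X=(n+1)/2$ (the strictly larger case being empty for odd $n$ with $\rho_X\geq 2$), and $X$ is one of
\[ \mathbb{P}^{\frac{n-1}{2}}\times\mathbb{Q}^{\frac{n+1}{2}}, \qquad \mathbb{P}\big(T_{\mathbb{P}^{(n+1)/2}}\big), \qquad \mathbb{P}_{\mathbb{P}^{(n+1)/2}}\big(\mathcal{O}(1)\oplus\mathcal{O}^{\oplus\frac{n-1}{2}}\big). \]
A routine $\gcd$ computation of the anticanonical class confirms that each of these has index exactly $(n+1)/2$ and that no other product of projective spaces or of a projective space with a quadric qualifies.

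For the sufficiency direction I would argue as follows. The products $\mathbb{P}^{n/2}\times\mathbb{P}^{n/2}$ and $\mathbb{P}^{(n-1)/2}\times\mathbb{Q}^{(n+1)/2}$ are SEC because each factor is an I.H.S.S. (Example \ref{e.IHSS}), hence a SEC of some $\mathbb{G}_a^a$ resp. $\mathbb{G}_a^b$, and the diagonal action of $\mathbb{G}_a^{a}\times\mathbb{G}_a^{b}=\mathbb{G}_a^{n}$ has the product of the two open orbits as a dense orbit isomorphic to $\mathbb{C}^n$. For the last bundle, set $m=(n+1)/2$ and use the standard identification $\mathbb{P}_{\mathbb{P}^{m}}(\mathcal{O}(1)\oplus\mathcal{O}^{\oplus(m-1)})\cong \mathrm{Bl}_\Lambda \mathbb{P}^n$, the blow-up of $\mathbb{P}^n$ along a linear subspace $\Lambda\cong\mathbb{P}^{m-2}$. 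Taking the translation action of $\mathbb{G}_a^n$ on $\mathbb{P}^n$ with open orbit $\{x_0\neq 0\}\cong\mathbb{C}^n$, the boundary hyperplane $\{x_0=0\}$ is fixed pointwise, so any $\Lambda\subset\{x_0=0\}$ is invariant; the blow-up is therefore equivariant and, being an isomorphism over $\mathbb{C}^n$, retains $\mathbb{C}^n$ as an open orbit. Hence this bundle is a SEC.

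It remains to rule out $X=\mathbb{P}(T_{\mathbb{P}^m})$ with $m=(n+1)/2\geq 2$, and this is the step I expect to be the main obstacle: it is Fano of the correct index with $\rho_X=2$ and carries the large automorphism group $\mathrm{Aut}^\circ(X)=\mathrm{PGL}_{m+1}$, so a naive dimension count (Schur's bound on abelian unipotent subgroups) is inconclusive once $m\geq 5$. Instead I would use a structural argument. Realizing $X\subset\mathbb{P}^m\times(\mathbb{P}^m)^*$ as the incidence variety, the first projection $p_1\colon X\to\mathbb{P}^m$ is one of the two extremal contractions and is $\mathrm{PGL}_{m+1}$-equivariant, the action on $\mathbb{P}^m$ being the tautological, faithful one. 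If $X$ were a SEC, then $\mathbb{G}_a^n\hookrightarrow\mathrm{Aut}^\circ(X)=\mathrm{PGL}_{m+1}$ would act on $\mathbb{P}^m$ faithfully through $p_1$ and, since $p_1$ is dominant and $\mathbb{G}_a^n$ has a dense orbit on $X$, with a dense orbit on $\mathbb{P}^m$.

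The final key point is that an abelian group acting faithfully on $\mathbb{P}^m$ with a dense orbit must have dimension exactly $m$: commutativity forces all point-stabilizers along the dense orbit to coincide in a single subgroup $S$, which fixes the dense orbit and hence (its fixed locus being closed) all of $\mathbb{P}^m$ pointwise, so $S$ is trivial by faithfulness; the group then acts almost freely with dense orbit, giving $\dim=m$. This forces $n=m$, contradicting $n=2m-1>m$. Thus $\mathbb{P}(T_{\mathbb{P}^m})$ is not a SEC, and the three listed families are precisely the SEC among the candidates, completing the proof. I would expect the same equivariant-projection principle (a SEC mapping equivariantly onto $\mathbb{P}^k$ under a faithful action forces the group dimension to equal $k$) to be the conceptual heart of the argument, cleanly separating the genuine products, where the two contractions pass through different simple factors of $\mathrm{Aut}^\circ$, from the homogeneous bundle $\mathbb{P}(T_{\mathbb{P}^m})$, where both contractions pass through the single group $\mathrm{PGL}_{m+1}$.
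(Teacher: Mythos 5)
Your proposal is correct and follows the same skeleton as the paper (reduce to Wi\'sniewski's list of Fano manifolds of large index with $\rho\geq 2$, then decide each case; note the relevant reference is \cite{W2}, ``On Fano manifolds of large index'', not \cite{W1}). The treatment of the three varieties in $(\ast)$ matches the paper: products of SECs are SECs, and $\mathbb{P}_{\mathbb{P}^{(n+1)/2}}(\mathcal{O}(1)\oplus\mathcal{O}^{\oplus (n-1)/2})$ is the blow-up of $\mathbb{P}^n$ along a linear subspace of the boundary hyperplane of the translation action, hence a SEC. Where you genuinely diverge is the exclusion of $\mathbb{P}(T_{\mathbb{P}^{(n+1)/2}})$: the paper simply cites Arzhantsev's classification \cite{A} of flag varieties admitting additive actions, whereas you give a self-contained argument via the equivariant projection $p_1\colon X\to\mathbb{P}^m$ and the observation that a connected abelian group acting faithfully on a variety with a dense orbit has trivial generic stabilizer (all stabilizers along the dense orbit coincide, so their common fixed locus is everything), forcing $\dim \mathbb{G}_a^n = m < 2m-1 = n$. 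This argument is correct --- it uses only $\mathrm{Aut}^\circ(\mathbb{P}(T_{\mathbb{P}^m}))=\mathrm{PGL}_{m+1}$ (standard for this non-exceptional flag variety) and elementary orbit--stabilizer reasoning --- and it buys a proof that does not depend on \cite{A}, at the modest cost of having to justify the computation of $\mathrm{Aut}^\circ$; the paper's citation is shorter and also disposes of all other homogeneous candidates uniformly elsewhere in the text.
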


\begin{proof}
By \cite{W2}, a Fano manifold with $i_X \geq (n+1)/2$ and $\rho_X \geq 2$ is one of the varieties in the list $(\ast)$ or the homogeneous variety $\pit T_{\pit^{\frac{n+1}{2}}}$, while the latter is not a SEC by \cite{A}. The first two varieties in $(\ast)$ are SEC. The projective bundle $\pit_{\pit^{\frac{n+1}{2}}} (\0(1) \oplus \0^{\frac{n-1}{2}})$ is isomorphic to the blowup of $\pit^n$ along a linear $\pit^{\frac{n-3}{2}}$, which is a SEC.
\end{proof}

As immediate corollaries, we have
\begin{corollary}\label{c.delPezzo}
Let $X$ be a del Pezzo manifold with $\rho_X \geq 2$, then $X$ is a SEC if and only if $X$ is one of the following:
\begin{itemize}
\item[(a)] blowup of $\pit^2$ at 1 or 2 points;
\item[(b)] $\pit^2 \times \pit^2$;
\item[(c)] blowup of $\pit^3$ at 1 point;
\item[(d)] $\pit^1 \times \pit^1 \times \pit^1$.
\end{itemize}
\end{corollary}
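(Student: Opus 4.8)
The plan is to split according to dimension, treating $n\geq 3$ by Proposition \ref{p.highIndex} and the remaining case $n=2$ by the classical theory of del Pezzo surfaces. A del Pezzo manifold has index exactly $i_X=n-1$, and the inequality $n-1\geq (n+1)/2$ holds precisely when $n\geq 3$. Hence for $n\geq 3$ every del Pezzo SEC with $\rho_X\geq 2$ must appear among the three families listed in $(\ast)$, and conversely each of those families is a SEC by Proposition \ref{p.highIndex}; so the only task for $n\geq 3$ is to isolate which members of $(\ast)$ have index exactly $n-1$.

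I would carry out this bookkeeping family by family. Writing ${\rm Pic}$ in the evident bases, one checks that $\pit^{\frac{n}{2}}\times\pit^{\frac{n}{2}}$ has index $\frac{n}{2}+1$, that $\pit^{\frac{n-1}{2}}\times\qit^{\frac{n+1}{2}}$ has index $\frac{n+1}{2}$, and that $\pit_{\pit^{\frac{n+1}{2}}}(\0(1)\oplus\0^{\frac{n-1}{2}})$---being the blow-up of $\pit^n$ along a linear $\pit^{\frac{n-3}{2}}$, whose anticanonical class is $(n+1)H-\frac{n+1}{2}E$---again has index $\frac{n+1}{2}$. Imposing that each of these equal $n-1$ forces $n=4$ for the first family (yielding $\pit^2\times\pit^2$, case (b)) and $n=3$ for the latter two (yielding $\pit^1\times\qit^2\cong\pit^1\times\pit^1\times\pit^1$, case (d), and the blow-up of $\pit^3$ at one point, case (c)). This settles $n\geq 3$.

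For $n=2$ the index hypothesis of Proposition \ref{p.highIndex} fails, so I would argue directly. The del Pezzo surfaces of index $1$ (that is, excluding $\pit^2$ and $\qit^2\cong\pit^1\times\pit^1$) with $\rho_X\geq 2$ are exactly the blow-ups ${\rm Bl}_k\,\pit^2$ at $1\leq k\leq 8$ points in general position. Since any SEC of $\G_a^2$ carries a $2$-dimensional connected unipotent subgroup of ${\rm Aut}^\circ(X)$ acting with a dense orbit, I would invoke the classical description of the connected automorphism groups of del Pezzo surfaces: for degree $\leq 6$ (i.e. $k\geq 3$) the group ${\rm Aut}^\circ$ is either a torus (degree $6$) or finite (degree $\leq 5$), hence contains no copy of $\G_a^2$, so the surface is not a SEC. For $k=1$ and $k=2$ (degrees $8$ and $7$) one exhibits explicit $\G_a^2$-actions with an open orbit---for instance ${\rm Bl}_1\,\pit^2=\mathbb{F}_1$ inherits one from the subgroup of ${\rm PGL}_3$ fixing the blown-up point---which gives the surfaces in (a).

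The main obstacle I anticipate is the $n=2$ case, which lies outside the reach of Proposition \ref{p.highIndex} and must be handled through the automorphism-group classification of del Pezzo surfaces rather than through the VMRT and index machinery of the preceding sections. The $n\geq 3$ part, by contrast, reduces to a routine index computation once the candidate list $(\ast)$ is in hand.
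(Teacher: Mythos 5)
Your proposal is correct and follows the route the paper intends: the paper presents this as an immediate consequence of Proposition \ref{p.highIndex}, and your index bookkeeping for the three families in $(\ast)$ (forcing $n=4$, $n=3$, $n=3$ respectively) recovers exactly cases (b), (c), (d). You are also right that the surface case falls outside the hypothesis $i_X\geq (n+1)/2$ and must be settled separately via the classical description of $\mathrm{Aut}^\circ$ of del Pezzo surfaces (or the classification of SEC surfaces in \cite{HT}); the paper leaves this step implicit, and your treatment of it is accurate.
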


\begin{corollary}
Let $X$ be a Mukai manifold with $\rho_X \geq 2$.  Assume that $\dim X \geq 5$, then $X$ is a SEC if and only if $X$ is one of the following:
\begin{itemize}
\item[(a)] $\pit^3 \times \pit^3$;
\item[(b)] $\pit^2 \times \qit^3$;
\item[(c)] blowup of $\pit^4$ at a point.
\end{itemize}
\end{corollary}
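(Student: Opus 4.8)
The plan is to read this off directly from Proposition~\ref{p.highIndex}. A Mukai manifold has index $i_X=n-2$, and the inequality $n-2\geq (n+1)/2$ is equivalent to $n\geq 5$; thus under the standing hypothesis $\dim X=n\geq 5$ every Mukai manifold automatically satisfies $i_X\geq (n+1)/2$. Together with $\rho_X\geq 2$, Proposition~\ref{p.highIndex} then applies verbatim and tells me that $X$ is a SEC precisely when it lies in the list $(\ast)$. So the entire proof reduces to singling out, inside $(\ast)$, those members whose index equals $n-2$.

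Next I would compute the index of each of the three families in $(\ast)$ as a function of $n$, using that the index of a product of prime Fano manifolds is the greatest common divisor of the factor indices, with $i_{\pit^a}=a+1$ and $i_{\qit^b}=b$. For $\pit^{n/2}\times\pit^{n/2}$ this gives $i_X=\tfrac n2+1$, and $i_X=n-2$ forces $n=6$, producing $\pit^3\times\pit^3$. For $\pit^{(n-1)/2}\times\qit^{(n+1)/2}$ it gives $i_X=\gcd\!\big(\tfrac{n+1}2,\tfrac{n+1}2\big)=\tfrac{n+1}2$, and $i_X=n-2$ forces $n=5$, producing $\pit^2\times\qit^3$.

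For the third family I would use the identification recorded in Proposition~\ref{p.highIndex}, namely $\pit_{\pit^{(n+1)/2}}(\0(1)\oplus\0^{(n-1)/2})\cong \mathrm{Bl}_{\pit^{(n-3)/2}}\pit^n$. Writing $-K=(n+1)H-\big(n-\tfrac{n-3}2-1\big)E=(n+1)H-\tfrac{n+1}2E$ in terms of the pulled-back hyperplane $H$ and the exceptional divisor $E$, I read off $i_X=\gcd\!\big(n+1,\tfrac{n+1}2\big)=\tfrac{n+1}2$; once more $i_X=n-2$ forces $n=5$, at which point the center is a $\pit^1$. Hence the third case is the blowup of $\pit^5$ along a line.

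I do not anticipate any serious obstacle here: the corollary is an immediate specialization of Proposition~\ref{p.highIndex}, and the only genuine content is the index bookkeeping. The single point demanding care is the third family, where I must pass correctly from the projective bundle to the blowup and apply the canonical-bundle formula for a blowup along a smooth linear center; this computation is what pins down $n=5$ and identifies the variety, and it is also the place where one should double-check the precise statement of case~(c).
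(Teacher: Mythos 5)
Your approach is exactly the paper's: the corollary is presented as an immediate consequence of Proposition \ref{p.highIndex}, the only content being the observation that $n-2\geq (n+1)/2$ is equivalent to $n\geq 5$ and the subsequent selection, from the list $(\ast)$, of the members whose index equals $n-2$. Your index bookkeeping is correct, and it in fact exposes a misprint in the paper's item (c): the third family of $(\ast)$ at $n=5$ is $\pit_{\pit^3}(\0(1)\oplus\0^2)$, i.e.\ the blowup of $\pit^5$ along a line, with $-K=3(2H-E)$ and index $3=5-2$, whereas the blowup of $\pit^4$ at a point has dimension $4$ and index $1$ and so satisfies neither hypothesis of the statement. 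Your identification of case (c) as the blowup of $\pit^5$ along a line is the correct one.
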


Notice that the Fano SEC in dimension 3 are fully classified in \cite{HM}, while the  Mukai fourfolds are classified by Wi\'sniewski \cite{W1}. To complete the picture, it remains to determine which Fano fourfolds in the list of \cite{W1} are SEC.

\begin{proposition}\label{p.Wisniewski}
 Let $X$ be a Mukai fourfold with $\rho_X \geq 2$. Then $X$ is a SEC if and only if $X$ is one of the following:
\begin{itemize}
\item[(a)] $\pit^1\times \pit^3$;
\item[(b)] $\pit^1\times \mathbb{P}_{\mathbb{P}^2}(\mathcal{O}(1)\oplus \mathcal{O})$;
\item[(c)] $\pit^1 \times \pit^1 \times \pit^1 \times \pit^1$;
\item[(d)] blowup of $\mathbb{Q}^4$ along a line;
\item[(e)] $\mathbb{P}_{\mathbb{Q}^3}(\mathcal{O}(-1)\oplus \mathcal{O})$;
\item[(f)] $\mathbb{P}_{\mathbb{P}^3}(\mathcal{O}(-1)\oplus \mathcal{O}(1))$.
\end{itemize}
\end{proposition}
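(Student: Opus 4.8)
The plan is to go through Wi\'sniewski's list of Mukai fourfolds with $\rho_X \geq 2$ in \cite{W1} and decide, family by family, which members are SEC. The engine of the argument is the following \emph{equivariant contraction principle}: since ${\rm Aut}^\circ(X)$ is connected it acts trivially on the discrete set of extremal rays of $\overline{NE}(X)$, so every Mori contraction $f \colon X \to Y$ is ${\rm Aut}^\circ(X)$-equivariant. If $X$ is a SEC with open orbit $O \cong \C^n$, then $O \cong \mathbb{G}_a^n$ acts on $Y$ through $f$ with a dense orbit $\mathbb{G}_a^n / K$; as every closed subgroup $K$ of a vector group is again a vector group, $Y$ is a SEC of $\mathbb{G}_a^{\dim Y}$ and the general fibre is a SEC of $\mathbb{G}_a^{\dim F}$ under $K \cong \mathbb{G}_a^{\dim F}$. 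This reduces almost every case to the classifications already available in lower dimension, namely Corollary \ref{c.delPezzo} for del Pezzo threefolds and \cite{HM} for Fano threefolds.

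For the positive direction I would dispose of the product cases (a), (b) and (c) at once, using that a product of SEC is a SEC: it suffices to recall that $\mathbb{P}^1$, $\mathbb{P}^3$, and the two SEC del Pezzo threefolds of Corollary \ref{c.delPezzo} (the blow-up $\mathbb{P}_{\mathbb{P}^2}(\mathcal{O}(1)\oplus\mathcal{O})$ of $\mathbb{P}^3$ at a point, and $\mathbb{P}^1 \times \mathbb{P}^1 \times \mathbb{P}^1$) are SEC. For the bundles (e) and (f) I would normalise by the twists $\mathbb{P}(\mathcal{O}(-1)\oplus\mathcal{O}) \cong \mathbb{P}(\mathcal{O}\oplus\mathcal{O}(1))$ and $\mathbb{P}(\mathcal{O}(-1)\oplus\mathcal{O}(1)) \cong \mathbb{P}(\mathcal{O}\oplus\mathcal{O}(2))$, reducing both to the model $\mathbb{P}_B(\mathcal{O}\oplus L)$ over a SEC base $B$ ($\mathbb{Q}^3$ or $\mathbb{P}^3$): lifting the base action to the bundle, the complement of the infinity section is the total space of a line bundle, and translation along its fibres by a section of $L$ that is nowhere zero on the open orbit of $B$ supplies an extra $\mathbb{G}_a$-factor, yielding an open $\C^{\dim B + 1} = \C^4$ orbit. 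For (d) I would produce a $\mathbb{G}_a^4$-action on $\mathbb{Q}^4$ whose boundary divisor contains a line that is fixed setwise, and blow this line up; invariance of the centre makes the action lift with an open orbit. (Here I expect \cite{BW} to furnish a ready-made description.)

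For the negative direction I would eliminate each surviving family by the contraction principle together with dimension and automorphism bounds. Any $X$ with $\dim {\rm Aut}^\circ(X) < 4$ is excluded outright; in particular a product $V \times \mathbb{P}^1$ with $V$ a del Pezzo threefold off the list of Corollary \ref{c.delPezzo} fails because $V$ is not a SEC, very often already because ${\rm Aut}(V)$ is finite. A projective bundle or blow-up whose base or general fibre is not a SEC is excluded by the principle, while the few remaining homogeneous or quasi-homogeneous models are ruled out exactly as in the Picard-number-one analysis (linearly degenerate VMRT, or a semisimple ${\rm Aut}^\circ(X)$ admitting no $4$-dimensional commutative unipotent subgroup with dense orbit).

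The hard part will be the borderline families for which ${\rm Aut}^\circ(X)$ is positive-dimensional but the contraction principle stops short of a contradiction: there one must genuinely verify that no commutative unipotent subgroup of dimension $4$ acts with a dense orbit, which forces one to pin down ${\rm Aut}^\circ(X)$, or at least its unipotent radical, precisely. Together with the length of Wi\'sniewski's list, it is this bookkeeping, rather than any single conceptual step, that constitutes the real obstacle.
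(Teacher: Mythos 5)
Your skeleton coincides with the paper's: both run through Wi\'sniewski's list and use Blanchard's lemma (your ``equivariant contraction principle'') to push a $\mathbb{G}_a^4$-action down through each Mori contraction. The positive direction is essentially right, though for (e) and (f) the paper argues differently: instead of lifting the base action to $\mathbb{P}_B(\mathcal{O}\oplus L)$ and translating along the fibres, it contracts the distinguished section to the vertex of the cone $\mathbb{Q}^4_0\subset\mathbb{P}^5$ (resp.\ to the singular point of $\mathbb{P}(1,1,1,1,2)$), both of which are known to be SEC by \cite{AP}, \cite{AR}, and observes that the unique singular point is automatically fixed, so the blow-up is again a SEC. Your direct construction should also work, but you would still need to check that the fibrewise translations by a suitable invariant section of $L$ combine with the lifted base action into a \emph{commutative} unipotent group of dimension $4$ with a dense orbit.

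The genuine gap is in the negative direction. For the Verra fourfold (the double cover of $\mathbb{P}^2\times\mathbb{P}^2$ branched in a $(2,2)$-divisor) and for the three B\v{a}nic\v{a}-sheaf cases --- the $(1,2)$-divisor in $\mathbb{P}^2\times\mathbb{P}^3$, the intersection of two $(1,1)$-divisors in $\mathbb{P}^3\times\mathbb{P}^3$, and the $(1,1)$-divisor in $\mathbb{P}^2\times\mathbb{Q}^3$ --- your contraction principle yields no contradiction: every contraction of these fourfolds has a SEC base ($\mathbb{P}^2$, $\mathbb{P}^3$ or $\mathbb{Q}^3$) and SEC general fibre ($\mathbb{P}^1$), and ${\rm Aut}^\circ$ is not obviously too small. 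You explicitly defer this residue, but it is exactly where the content of the proof lies. The paper handles it as follows: for the Verra fourfold, the action descends (by the argument of \cite{LP}) to an additive action on $\mathbb{P}^2\times\mathbb{P}^2$ preserving the branch divisor, which is impossible since by \cite{HT} the boundary divisors of such actions have bidegree $(1,0)$ or $(0,1)$, not $(2,2)$; for the B\v{a}nic\v{a} cases, the finitely many points of the base over which the fibre jumps to $\mathbb{P}^2$ (see \cite{BW}) must be fixed points of the induced action, and one derives a contradiction from their location --- eight points that cannot all lie in the boundary hyperplane of $\mathbb{P}^3$, four distinct invariant hyperplanes in $(\mathbb{P}^3)^\vee$, or two fixed points on $\mathbb{Q}^3$, which admits only one. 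Without arguments of this kind your proof does not close. A smaller omission: for the blow-up of $\mathbb{Q}^4$ along a conic (case (7) of Wi\'sniewski's list) you still need the coordinate computation showing that the only smooth curves invariant under the additive action on $\mathbb{Q}^4$ are lines, which is what separates that case from your case (d).
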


\begin{proof}
 By Wi\'sniewski's classification \cite{W1} (see \cite[Table 12.7]{IP}), $X$ is isomorphic to one of the following varieties,
 \begin{itemize}
  \item[(1)] $\mathbb{P}^1\times V$, where $V\cong \mathbb{P}T_{\mathbb{P}^2}$ or $V\cong V_d$ is a del Pezzo threefold of degree $d$ with $1\leq d \leq 5$ and $\rho_{V_d} = 1$;
  \item[(2)] $\mathbb{P}^1\times V$, where $V$ is either $\mathbb{P}^3$, or $\mathbb{P}_{\mathbb{P}^2}(\mathcal{O}(1)\oplus \mathcal{O})$ (the blowup of $\mathbb{P}^3$ at a point), or $\mathbb{P}^1\times \mathbb{P}^1 \times \mathbb{P}^1$;
  \item[(3)] a Verra fourfold, that is, a double cover of $\mathbb{P}^2\times \mathbb{P}^2$ whose branch locus is a divisor of bidegree $(2,2)$;
  \item[(4)] a divisor on $\mathbb{P}^2\times \mathbb{P}^3$ of bidegree $(1,2)$;
  \item[(5)] an intersection of two divisors of bidegree $(1,1)$ on $\mathbb{P}^3\times \mathbb{P}^3$;
  \item[(6)] a divisor on $\mathbb{P}^2\times \mathbb{Q}^3$ of bidegree $(1,1)$;
  \item[(7)] the blowup of $\mathbb{Q}^4$ along a conic which is not contained in a plane lying on $\mathbb{Q}^4$;
  \item[(8)] $\mathbb{P}_{\mathbb{P}^3}(\mathcal{N})$, where $\mathcal{N}$ is the null-correlation bundle on $\mathbb{P}^3$;
  \item[(9)] the blowup of $\mathbb{Q}^4$ along a line;
  \item[(10)] $\mathbb{P}_{\mathbb{Q}^3}(\mathcal{O}(-1)\oplus \mathcal{O})$;
  \item[(11)] $\mathbb{P}_{\mathbb{P}^3}(\mathcal{O}(-1)\oplus \mathcal{O}(1))$.
 \end{itemize}
In case (1), it follows from Blanchard's lemma \cite[Theorem 7.2.1]{B} that an effective action of $\mathbb{G}_a^4$ on $X\cong \mathbb{P}^1\times V$ descends in a unique way to an action of $\mathbb{G}_a^4$ on $V$ making the second projection an equivariant morphism. The image of the latter action is isomorphic to $\mathbb{G}_a^3$ making $V$ a SEC, while by \cite[Theorem 6.1]{HT} the only SEC threefolds with Picard number one are $\mathbb{P}^3$ and $\mathbb{Q}^3$. We conclude therefore that the variety $X$ is not a SEC. In case (2), the listed varieties are clearly SEC.

In case (3), an effective action of $\mathbb{G}_a^4$ on $X$ induces the inclusions $\mathbb{G}^4\subseteq {\rm Aut}_L(B)\subseteq {\rm Aut}(\mathbb{P}^2\times \mathbb{P}^2)$, where ${\rm Aut}_L(B)$ stands for the group of automorphisms of the branch locus $B$ induced by automorphisms of $\mathbb{P}^2\times \mathbb{P}^2$. Indeed, this follows verbatim from the proof of \cite[Lemma 4.1, Lemma 4.2, Proposition 4.3]{LP} replacing $\mathbb{P}^n$ by $\mathbb{P}^n\times \mathbb{P}^n$. In particular, we obtain in this way an effective action of $\mathbb{G}_a^4$ on $\mathbb{P}^2\times \mathbb{P}^2$ that fixes the branch locus $B$. We know on the other hand, after Hassett and Tschinkel \cite[Proposition 3.2]{HT}, that the boundary divisors for the possible effective actions of $\mathbb{G}_a^4$ on $\mathbb{P}^2\times \mathbb{P}^2$ are of bidegree $(1,0)$ and $(0,1)$, hence $X$ cannot be a SEC.

In cases (4), (5) and (6), the variety $X$ is isomorphic to the projectivization $\mathbb{P}_Y(\mathcal{B})$ of a reflexive non-locally free sheaf $\mathcal{B}$ on a smooth variety $Y$. Such sheaves are called B\v{a}nic\v{a} sheaves in \cite[Section 2]{BW}, where it is shown that the canonical map $X\to Y$ is a Mori contraction with connected but not equidimensional fibers. The general strategy to prove that none of these cases give rise to a SEC will be to analyze the points where the dimension of the fibers of $X\to Y$ jumps.

It follows from \cite[Theorem 6.8]{BW} that for $X$ as in case (4) we have $Y\cong \mathbb{P}^3$ and the canonical fibration $X\to \mathbb{P}^3$ has 8 fibers isomorphic to $\mathbb{P}^2$. By Blanchard's lemma, an effective action of $\mathbb{G}_a^4$ on $X$ induces a unique action on $\mathbb{P}^3$ for which $X\to \mathbb{P}^3$ is equivariant. As before, there is an induced effective action of $\mathbb{G}_a^3$ making $\mathbb{P}^3$ a SEC. On one hand, we notice that the 8 points $p_1,\ldots,p_8\in \mathbb{P}^3$ having 2-dimensional fibers are fixed and hence contained in the boundary hyperplane divisor $H\cong \mathbb{P}^2\subset \mathbb{P}^3$. On the other hand, if we write
$$X=\left\{\sum_{i=0}^2 x_i q_i(y_0,y_1,y_2,y_3) = 0 \right\}\subset \mathbb{P}_{\mathbf{x}}^2 \times \mathbb{P}_{\mathbf{y}}^3, $$
where the $q_i$ are quadratic forms, we have that $X\to \mathbb{P}^3$ is induced by the second projection and hence it has 2-dimensional fibers over the set $S=\{q_0(\mathbf{y})=q_1(\mathbf{y})=q_2(\mathbf{y})=0\}=\{p_1,\ldots,p_8\}$. We claim that $S$ is not contained in a hyperplane $H$ and hence $X$ is not a SEC. Indeed, if we assume that $S\subset H$ and we denote by $Q_i$ the hyperquadric $\{q_i(\mathbf{y})=0\}$, then $L_i=Q_i\cap H$ is a (possibly reducible) curve of degree 2 in $H$ and $L_{ij}=Q_i\cap Q_j$ is a curve of degree 4 in $\mathbb{P}^3$ for $i\neq j$. Since $S\subset L_{ij} \cap H$, and $S$ is a 0-dimensional scheme of length 8, it follows that $L_{ij}$ has a common component, say $N_{ij}$, contained in $H$ and thus $L_i$ and $L_j$ have a common component for $i\neq j$. Since each of the $L_i$ is a curve of degree 2, we have that if $L_0\cap L_1\cap L_2$ is 0-dimensional then each $L_i$ is reducible and given by the union of two lines in $H$. We can easily verify that $L_0\cap L_1\cap L_2$ is of length 3 and contains $S$, which is absurd. We conclude therefore that $Q_0\cap Q_1\cap Q_3$ have a common component. However, this is not possible as $X$ is irreducible. The cases (5) and (6) are similar but easier: in the former case we have $Y\cong \mathbb{P}^3$ and the canonical fibration $X\to \mathbb{P}^3$ has 4 fibers isomorphic to $\mathbb{P}^2$ by \cite[Theorem 6.8]{BW}. More precisely, for each $p_1,\ldots,p_4\in \mathbb{P}^3$ having 2-dimensional fiber, the fiber is given by the dual hyperplane $H_{p_i}$ in $(\mathbb{P}^3)^\vee$ determined by $p_i$. Since each of the points $p_i$ are fixed we deduce that each $H_{p_i}$ is invariant under the induced action of $\mathbb{G}_a^3$ in $(\mathbb{P}^3)^\vee$. Thus, we get four different invariant divisors, a contradiction. In the latter case we have $Y\cong \mathbb{Q}^3$, and the canonical fibration $X\to \mathbb{Q}^3$ has 2 fibers isomorphic to $\mathbb{P}^2$. Hence the result follows from the fact \cite[Theorem 6.1]{HT} that there is a unique effective action of $\mathbb{G}_a^3$ making $\mathbb{Q}^3$ a SEC with a unique fixed point.

In cases (7) and (9) the variety $X$ is isomorphic to the blowup of $\mathbb{Q}^4$ along a smooth curve $C\subset \mathbb{Q}^4$. By Blanchard's lemma, $X$ is a SEC if and only if $C$ is invariant under the unique effective action of $\mathbb{G}_a^4$ on $\mathbb{Q}^4$. A simple computation in coordinates shows that the only invariant smooth curves on $\mathbb{Q}^4$ are lines. Thus in case (7) $X$ is not a SEC, while in case (9) it is.

In case (8), it follows from \cite[Theorem 3.1]{CP} that $X\cong \mathbb{P}_{\mathbb{P}^3}(\mathcal{N})$ is isomorphic a homogeneous space $G/P$. Hence it follows from \cite{A} that $X$ is not a SEC.

In cases (10) and (11) there is a blowdown $X\to Z$ sending the divisor corresponding to a section of the $\pit^1$-bundle structure of $X$ to a point $z\in Z$, where $Z\cong \mathbb{Q}_0^4$ is the cone over $\mathbb{Q}^3$ in $\mathbb{P}^5$ and $Z\cong \mathbb{P}(1,1,1,1,2)$, respectively, and $z\in Z$ is the only singular point of each of these varieties. In both cases $Z$ is a SEC (cf. \cite[Section 6]{AP}, \cite[Proposition 2]{AR}) and $z\in Z$ is a fixed point under the respective actions since is the only singular point. We conclude therefore that in both cases (10) and (11) $X$ is a SEC.
\end{proof}

\bigskip
Baohua Fu (bhfu@math.ac.cn)

MCM, AMSS, Chinese Academy of Sciences, 55 ZhongGuanCun East Road, Beijing, 100190, China
and
 School of Mathematical Sciences, University of Chinese Academy of Sciences, Beijing, China

 \bigskip

 Pedro Montero (pmontero@amss.ac.cn)

 AMSS, Chinese Academy of Sciences, 55 ZhongGuanCun East Road, Beijing, 100190, China

\end{document}